\newlist{steps}{enumerate}{1}
\setlist[steps, 1]{label = {\it \underline{\smash{Step~\arabic*:}}}, ref={\it Step~\arabic*}, wide, itemindent = 3pt}
\theoremstyle{plain}
\newtheorem{thm}{Theorem}[section]
\newtheorem{lem}[thm]{Lemma}
\newtheorem{prop}[thm]{Proposition}
\theoremstyle{definition}
\newtheorem{defi}[thm]{Definition}
\theoremstyle{remark}
\newtheorem{rem}[thm]{Remark}
\numberwithin{equation}{section}
\newcommand{\fls}{(-\Delta)^s}
\newcommand{\R}{\mathbb{R}}
\newcommand{\N}{\mathbb{N}}
\newcommand{\eps}{\varepsilon}
\newcommand{\be}{{\boldsymbol{e}}}
\def\fls{{(-\Delta)^s}}
\def\be{{\boldsymbol{e}}}
\def\R{\mathbb{R}}
\def\N{\mathbb{N}}
\def\L{\mathcal{L}}
\def\LL{\mathfrak{L}}
\def\LLL{\LL^\omega_s(\lambda, \Lambda)}
\def\I{\mathcal{I}}
\def\J{\mathcal{J}}
\def\II{\mathfrak{I}}
\def\III{\II^\omega_s(\lambda, \Lambda)}
\def\IIL{\mathfrak{I}^\omega_s(\lambda, \Lambda)}
\def\A{\mathcal{A}}
\def\B{\mathcal{B}}
\def\M{\mathcal{M}}
\newcommand{\Mp}{{\M^+}}
\newcommand{\Mm}{{\M^-}}
\newcommand{\Mpm}{{\M^\pm}}
\DeclareRobustCommand{\S}{\ifmmode\mathsection\else\textsection\fi}
\renewcommand\mathsection{\operatorname{\mathbb{S}}}
\DeclareMathOperator{\dist}{dist}
\newcommand{\average}{{\mathchoice {\kern1ex\vcenter{\hrule height.4pt
width 6pt depth0pt} \kern-9.7pt} {\kern1ex\vcenter{\hrule
height.4pt width 4.3pt depth0pt} \kern-7pt} {} {} }}
\def\R{\mathbb{R}}
\title[Smooth approximations for nonlocal elliptic equations]{Smooth approximations for fully nonlinear\\ nonlocal elliptic equations}
\author{Xavier Fern\'andez-Real}
\address{EPFL SB, Station 8, CH-1015 Lausanne, Switzerland}
\email{xavier.fernandez-real@epfl.ch}
\keywords{Nonlocal equations, fully nonlinear, approximation.}
\subjclass[2020]{35B65, 35A35, 35J60, 35R11, 47G20}
\thanks{The author was supported by the Swiss National Science Foundation (SNF grants 200021\_182565 and PZ00P2\_208930),   by the Swiss State Secretariat for Education, Research and lnnovation (SERI) under contract number MB22.00034, and by the AEI project PID2021-125021NAI00 (Spain).}
\begin{document}

\begin{abstract}
We show that any viscosity solution to a general fully nonlinear nonlocal elliptic equation can be approximated by smooth ($C^\infty$) solutions. 
\end{abstract}

\maketitle

%%%%%%%%%%%%%%%%%%%%%%%%%%%%%%%%%%%%%%%%%%%%%%%%%%%%%%%%%%%%%%%%%%%%%%%%%%%%%%%%%%%%%%%%%%%%%%%%%%%%%%%%%%%%%%%%%%%%%%%%%%%%%%%%%%%%%%%%%%%%%%%%%%%%%%%%%%%%%%%%%%%%%%%%%%%%%%%%%%%%
\section{Introduction}

The regularization of solutions to elliptic equations is a fundamental technique to generalize a priori regularity estimates to full classes of solutions (see \cite[Section 7.2]{GT77}, \cite[Section 5.3]{Eva98}, or \cite[Chapter 2]{FR22}). For weak or distributional solutions of  linear and translation invariant equations this is accomplished, for example, by convolving the solution with a smooth mollifier.

For viscosity solutions of fully nonlinear elliptic PDE, 
\[
F(D^2 u) = 0\quad\text{in}\quad B_1
\]
 (which in general, have no strong solutions), such a regularization procedure has been done in less straight-forward ways (see \cite{CSou08, CSou10};  or \cite{CS10} where the authors use nonlocal techniques). In neither of these cases, however, the approximation is done by solutions within the same class as the limit\footnote{It is actually an open problem to decide whether any viscosity solution of a fully nonlinear elliptic PDE can be approximated by smooth solutions, \cite{CS10}.}. 

In the nonlocal case, nevertheless, there is a natural way to regularize solutions of fully nonlinear nonlocal equations (see \eqref{eq:Iofheform00}), 
\begin{equation}
\label{eq:sol}
\I(u, x) = 0\quad\text{in}\quad B_1,
\end{equation}
by substituting the corresponding kernels of the linear operators near the origin by that of the fractional Laplacian; see \cite{CS11b} (and also \cite{CS11}), where Caffarelli and Silvestre use (and prove) that solutions to translation invariant and concave nonlocal equations (with $s > 1/2$  and smooth kernels) can be approximated by strong solutions (i.e., $C^{2,\alpha}$) to equations in the same class (see also \cite{Kri13} for a similar procedure for general elliptic operators with $s  >1/2$).  In some settings (see \cite{RTW23}), however, it is sometimes necessary to approximate by even more regular solutions (say, $C^4$), and the results in \cite{CS11b}-\cite{CS11} do not apply. In fact, for nonlocal equations, the approximating procedure remains open in the following cases:
\begin{itemize}
\item Approximation by smooth solutions, more regular than $C^{2,\alpha}$. 
\item General fully nonlinear equations. 
\item Non-translation invariant equations, for all $s\in (0, 1)$ and in different regularity classes. 
\end{itemize}

In this work  we tackle all three of the previous situations, and provide smooth ($C^\infty$) approximations of general fully nonlinear equations. This is a phenomenon that has no local counter-part described in the literature.

Observe that, in general, solutions to fully nonlinear nonlocal equations, \eqref{eq:Iofheform00}, are not better than $C^{2s+1}$, and it was unclear whether smooth solutions exist (outside of trivial settings) or how common they are. The only results we know in this direction are due to Yu in \cite{Yu172, Yu17}, who proved that certain special classes of equations admit smooth solutions. We prove here that this is, in fact, a common phenomenon: any solution to \eqref{eq:sol} can be approximated by $C^\infty$ solutions of equations in the same class. 

We expect our results to be useful in various settings. Some examples can be seen in \cite{Kri13, Ser15, FR23} to prove regularity estimates for general nonlocal fully nonlinear equations; in \cite{CDV22, RTW23} to apply Bernstein's technique in a nonlocal setting and obtain semiconvexity estimates; and in this same manuscript, where in subsection~\ref{ssec:equivalence} we use the regularization to prove the equivalence between distributional and viscosity solutions for linear translation invariant equations, for any $s\in (0, 1)$. 

We consider the class of linear operators with kernels comparable to the one of fractional Laplacian. Namely, we consider operators of the form 
\begin{equation}
\label{eq:Lu1}
\begin{split}
\L_x u(x) & = {\rm P.V.} \int_{\R^n}\big(u(x)-u(x+y)\big)K(x, y) \, dy \\
& = \frac12 \int_{\R^n}\big(2u(x)-u(x+y)-u(x-y)\big)K(x, y)\,dy,
\end{split}
\end{equation}
with
\begin{equation}
\label{eq:Kint1}
K(x, y) = K(x, -y)\qquad\text{in}\quad \R^n\,
\end{equation}
and
\begin{equation}
\label{eq:compdef}
0< \frac{\lambda}{|y|^{n+2s}}\le K(x, y) \le \frac{\Lambda}{|y|^{n+2s}}\qquad\text{in}\quad   \R^n.
\end{equation}

By considering a (concave) modulus of continuity $\omega:\R^+\to\R^+$, we will also assume a weak form of continuity with respect to the $x$-variable of the kernels:
\begin{equation}
\label{eq:contwK}
\int_{B_{2r}\setminus B_r} \left|K(x, y) - K(x', y)\right|\, dy \le \frac{\omega(|x-x'|)}{r^{2s}},\quad\text{for any} \ x, x'\in \R^n, \ r > 0. 
\end{equation}

We thus consider the class of linear nonlocal operators given by:
\[
\LL^\omega_s(\lambda, \Lambda) :=\big\{\L_x : 
\text{\eqref{eq:Lu1}-\eqref{eq:Kint1}-\eqref{eq:compdef}-\eqref{eq:contwK} holds}
 \big\},
\]
and the class $\II^\omega_s(\lambda, \Lambda)$  of fully nonlinear integro-differential operators of the form:
\begin{equation}
\label{eq:Iofheform00}
\I (u, x) = \inf_{b\in \B}\sup_{a\in \A}\big\{-\L_{ab, x} u(x) + c_{ab}(x)\big\}\quad\text{with}\quad \L_{ab, x}\in \LL^\omega_s(\lambda, \Lambda)
\end{equation}
given by 
\[
\II^\omega_s(\lambda, \Lambda) :=\left\{\I : \begin{array}{l}
\I \text{ is of the form \eqref{eq:Iofheform00}}, ~~\I(0, x)\in L^\infty(\R^n),~\text{and}\\
(c_{ab}(x))_{ab}~\text{have a common modulus of continuity $\omega$}
\end{array} \right\}.
\]

Our main goal is to prove the following, where $w_s\in L^1(\R^n)$ is the weight $w_s(x) = \frac{1}{1+|x|^{n+2s}}$. We refer to Remarks~\ref{rem:cinftyoperators} to \ref{rem:last}  below for a further characterization of the objects involved in the statement. 
\begin{thm}
\label{thm:cinftysol00}
Let $s\in (0, 1)$, and let $\I \in \III$. Let $u\in C(B_1)\cap L^\infty(\R^n)$ be any viscosity solution of
\[
\I (u, x) = f(x)\quad\text{in}\quad B_1
\]
for some $f\in C(B_1)$ with modulus of continuity $\omega$. 

Then, there exist sequences  of functions $u^{(\eps)}, f_\eps\in C^\infty_c(\R^n)$ such that 
\[
\begin{split}
    u^{(\eps)}\to u&\quad\text{uniformly in $B_{3/4}$ and in $L^1(\R^n; w_s)$},\\
     f_\eps\to f&\quad\text{uniformly in $B_{3/4}$,}
  \end{split}
 \]
and a sequence of operators $\I_\eps\in \III$ with 
\[
\I_\eps (0, x) \to \I(0, x) \quad\text{uniformly in $B_{3/4}$},\]
as $\eps \downarrow 0$, such that
\[
\I_\eps(u_\eps, x) = f_\eps(x) \quad\text{in}\quad B_{3/4}.
\]
Moreover, 
\[
\|u^{(\eps)}\|_{L^\infty(\R^n)}\le C \left(\|u\|_{L^\infty(\R^n)} + \|\I(0, \cdot) - f\|_{L^\infty(B_{3/4})} +\omega(\eps)\right)
\]
for some $C$ depending only on $n$, $s$, $\lambda$, and $\Lambda$. Finally, if $\I$ is translation invariant (resp. concave), then $\I_\eps$ are translation invariant (resp. concave).  
\end{thm}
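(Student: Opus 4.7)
The strategy, extending Caffarelli--Silvestre's kernel-regularization technique \cite{CS11b}, is to produce a regularized operator $\I_\eps\in\III$ with two features: (i) it is close to $\I$, so that viscosity solutions $v_\eps$ of $\I_\eps(\cdot,x)=f$ with boundary data $u$ converge to $u$ by stability, and (ii) its kernels behave like the fractional Laplacian near the origin, so that the regularization properties of $(-\Delta)^s$ can be bootstrapped all the way to $C^\infty$.

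For each kernel $K_{ab}(x,y)$ appearing in the representation \eqref{eq:Iofheform00} of $\I$, I would define $K_{ab}^{(\eps)}(x,y)$ as follows: mollify the $x$-dependence at scale $\eps$, leave $K_{ab}$ essentially unchanged for $|y|\ge\eps$, and for $|y|<\eps$ replace its $y$-dependence by a multiple of the fractional-Laplacian kernel $|y|^{-n-2s}$, with coefficient chosen so that the two-sided bounds \eqref{eq:compdef} are preserved. The zero-order terms $c_{ab}(x)$ are mollified in $x$ accordingly. By construction the resulting operator $\I_\eps$ lies in $\III$ (with the same $\lambda,\Lambda$ and the same modulus $\omega$ up to a vanishing error), inherits translation invariance and concavity of $\I$, and satisfies $\I_\eps(0,\cdot)\to\I(0,\cdot)$ uniformly in $B_{3/4}$. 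Solving the Dirichlet problem
\[
\I_\eps(v_\eps,x)=f(x)\quad\text{in } B_{3/4},\qquad v_\eps=u\quad\text{in } \R^n\setminus B_{3/4},
\]
by Perron's method yields a viscosity solution, and the comparison principle, together with $\I_\eps\to\I$ in the appropriate sense, gives $v_\eps\to u$ uniformly in $B_{3/4}$ and in $L^1(\R^n;w_s)$.

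The technical heart of the proof is to show that $v_\eps\in C^\infty(B_{3/4})$. The crucial observation is that, since $K_{ab}^{(\eps)}$ coincides with (a constant multiple of) the fractional-Laplacian kernel inside $B_\eps(0)$, and the remaining part of the kernel is smooth in $x$ with integrable $y$-behaviour on $\{|y|\ge\eps\}$, the operator $\I_\eps$ has a genuine fractional-Laplacian-type principal part plus a smoothing zeroth-order tail. Starting from the basic $C^{2s+\alpha}$ interior estimate for equations in $\III$ (in the spirit of \cite{CS11b, Ser15}), one iterates interior H\"older--Schauder estimates: at each step, the new regularity of $v_\eps$ feeds into the smooth-in-$x$ non-singular tail of $\I_\eps$ and into the fully nonlinear structure, while the fractional-Laplacian principal part gains the next $2s$ derivatives. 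Bootstrapping through all integer orders of smoothness gives $v_\eps\in C^\infty(B_{3/4})$. Finally, multiplying $v_\eps$ by a smooth cutoff supported outside a slightly larger ball yields $u^{(\eps)}\in C^\infty_c(\R^n)$, and setting $f_\eps:=\I_\eps(u^{(\eps)},\cdot)$ produces $f_\eps\in C^\infty_c(\R^n)$ with $f_\eps\to f$ uniformly in $B_{3/4}$; the quantitative $L^\infty$ bound on $u^{(\eps)}$ comes from the maximum principle applied to $\I_\eps$.

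The hardest step I expect is the $C^\infty$ bootstrap: for fully nonlinear, non-translation-invariant operators the iterative gain of $2s$ derivatives is delicate, and one must track simultaneously the regularity of the kernels in both variables, of the free term $f$, and of the sup-inf structure, taking care that the known interior estimates do not lose more regularity at each application than the fractional Laplacian provides. The whole point of the kernel modification is precisely to make the principal part as regularizing as possible ($(-\Delta)^s$-like, and smooth in $x$), so that the iteration is never stuck, and once $C^\infty$ is established the rest of the argument reduces to standard stability and cutoff manipulations.
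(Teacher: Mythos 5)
Your first two stages (kernel regularization near the origin by the fractional Laplacian, mollification in $x$, solving the Dirichlet problem and passing to the limit by stability) match Sections 3 and 4 of the paper, which produce strong solutions $u^{(\eps)}\in C^{2s+\delta}_{\rm loc}(B_{3/4})\cap C^\delta_c(\R^n)$. However, the step you identify as the technical heart --- bootstrapping interior Schauder estimates for $\I_\eps$ all the way to $v_\eps\in C^\infty$ --- contains a genuine gap, and it is precisely the obstruction the paper's Section 5 is designed to circumvent. When you split $\I_\eps(v,x)=-c_{n,s}^{-1}\fls v(x)+f_\eps(x)$ with $f_\eps(x)=\inf_b\sup_a\{\tilde\L^{(\eps)}_{ab,x}v(x)+c_{ab}(x)\}$, the effective right-hand side $f_\eps$ is an $\inf\sup$ of a family of functions, and an $\inf\sup$ of even $C^\infty$ functions is in general only Lipschitz. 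Hence the iteration $v\in C^\mu\Rightarrow f_\eps\in C^{\min\{\mu,1\}}\Rightarrow v\in C^{2s+\min\{\mu,1\}}$ saturates at $C^{2s+1}$ no matter how smooth the kernels are; this is consistent with the paper's remark in the introduction that solutions of fully nonlinear nonlocal equations are generically no better than $C^{2s+1}$ (the same phenomenon that blocks smoothness for nonconvex $F(D^2u)=0$ locally). No choice of kernel modification removes this: the nonsmoothness comes from the $\inf\sup$ structure, not from the kernels.

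The paper's resolution is different in kind: rather than proving that the solution of a fixed regularized equation is smooth, it mollifies the strong solution, setting $u_\eps:=u^{(\eps)}*\varphi_\eps\in C^\infty_c(\R^n)$, and then \emph{constructs a new operator} $\I_\eps\in\III$ for which $\I_\eps(u_\eps,x)=f_\eps(x)$. This is done (Proposition~\ref{prop:cinfty1}) by first replacing the $\inf\sup$ over $\A\times\B$ by a finite $\inf\sup$ over $2N_\eps$ indices chosen on an $\eps$-net of $B_{3/4}$ (changing the values at $u_\eps$ and at $0$ by at most $\eps$), then smoothing the resulting piecewise-linear function $F_\eps:\R^{N\times N}\to\R$ by convolution, and finally re-expressing the smoothed $F^r_\eps$ as an $\inf\sup$ of affine functions whose gradients lie in the convex hull of the coordinate matrices $\be_{ij}$; convexity of the class $\LLL$ then guarantees $\I_\eps\in\III$, and the chain rule gives $\I_\eps(u_\eps,\cdot)\in C^\infty$. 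You should replace your bootstrap step by an argument of this type (or another device that smooths the $\inf\sup$ itself); as written, your proof cannot produce approximants more regular than $C^{2s+1}$.
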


Some remarks are in order: 

\begin{rem}
\label{rem:cinftyoperators}
The new operators $\I_\eps$ are $C^\infty$, in the sense that for any $w\in C^\infty_c(\R^n)$ we have $\I_\eps(w, x) \in C^\infty(\R^n)$, with vanishing derivatives at infinity; see the proof of Proposition~\ref{prop:cinfty1}.  
\end{rem}

\begin{rem}
The right-hand side is $f_\eps = f*\varphi_\eps$, for some mollifier $\varphi_\eps$ (see \eqref{eq:mollifiervarphi}-\eqref{eq:mollifiervarphi2}. Moreover, if $u\in C^\alpha(\R^n)$, then there exists $\delta > 0$ depending only on $\alpha$, $n$, $s$, $\lambda$, and $\Lambda$, such that 
\[
\|u^{(\eps)}\|_{C^\delta(\R^n)}\le C \left(\|u\|_{C^\alpha(\R^n)}+ \|\I(0, \cdot) - f\|_{L^\infty(B_{3/4})} +\omega(\eps) \right),
\]
for some $C$ depending only on $n$, $s$, $\alpha$, $\lambda$, and $\Lambda$. 
\end{rem}

\begin{rem}
\label{rem:above}
If the operator $\I$ has higher regularity in $x$ and $y$, this is also inherited by $\I_\eps$. Namely,  if $\I\in \II^\mu_s(\lambda, \Lambda; \theta)$ for some $\theta>0$ and $\mu > 0$ (see the notation in subsection~\ref{ssec:notation} below), then $\I_\eps\in \II_s^\mu(\lambda, \Lambda; \theta)$ with $[\I_\eps]^x_{\mu}\le C[\I]^x_{\mu}$ and $[\I_\eps]^y_{\theta}\le C[\I]^y_{\theta}$, and $C$ depending only on $n$, $s$, $\lambda$, $\Lambda$, $\mu$, and $\theta$; see Remark~\ref{rem:reg_inherited}. The same conclusion also holds for pointwise norms; see Remarks~\ref{rem:pointwise} and \ref{rem:pointwise2}.
\end{rem}

\begin{rem}
The new operators have kernels that are convex combinations of~\eqref{eq:Kepsdef}, where $K$ denotes any kernel of $\I$. Thus, any property that is preserved by this operation is actually inherited by the new operators $\I_\eps$(for example, the linear operators in \cite[Chapter 2]{FR23}). 
\end{rem}

\begin{rem}
\label{rem:last} In Theorem~\ref{thm:cinftysol00}, one could take instead $u\in C(B_1)$ with controlled growth at infinity, $|u(x)|\le C(1+|x|^{2s-\tau}) $ in $\R^n$ for some $\tau > 0$.

If one wants to take $u\in C(B_1) \cap L^1(\R^n; w_s)$, however, it can be done provided that the continuity assumption in \eqref{eq:contwK} is understood in a pointwise sense, $|K(x, y) - K(x', y)|\le \omega(|x-x'|) |y|^{-2s-n}$. In the same way, the results in Remark~\ref{rem:above} would be true for pointwise estimates in $x$ and $y$ as well (like those in Remark~\ref{rem:pointwise}). 
\end{rem}

\subsection{Outline of the proof} After introducing some preliminary notation and results in Section~\ref{sec:prel_steps}, the proof is then divided into three parts. 

In the first part, Section~\ref{sec:aprox_reg_kernels}, we use the ideas from \cite{CS11b} (and \cite{Kri13, Ser15}) to obtain  a detailed version of \cite[Lemma 2.1]{CS11b} in this more general setting, Proposition~\ref{prop:regularization}, where we approximate solutions to fully nonlinear equations by solutions to more regular equations, and we believe is a result of independent interest. 

In Section~\ref{ssec:classicalapprox} we then use the ideas of the previous section to construct a sequence of strong solutions, globally H\"older and compactly supported, to general fully nonlinear translation invariant equations. %Notice that, in general, it is not true that viscosity solutions to nonlinear equations are strong solutions (unless one chooses the class of operators appropriately; see \cite{Yu17}).

Finally, in Section~\ref{sec:proofmain} we prove our main result, Theorem~\ref{thm:cinftysol00}, by regularizing the $\inf\sup$ structure of the fully nonlinear operators. In the local setting, $F(D^2u) = 0$, this would be accomplished by directly regularizing $F$. In the nonlocal setting, the analogue of $F$ would be defined on an infinite dimensional space (the space of kernels), and one needs to be more careful about such regularization.

\section{Preliminary steps}\label{sec:prel_steps}

In this section we introduce the notation used throughout the work, as well as some preliminary results that will be useful in the following proofs. 

\subsection{Notation} \label{ssec:notation} We will use kernels that have higher-order regularity in both $x$ and $y$. Let us introduce the corresponding spaces. 

Regarding the regularity in $x$, an analogous condition to \eqref{eq:contwK} can also be considered for any power. Thus, in general, for a given $\mu > 0$ with $\lceil \mu-1 \rceil = m$, we can impose
\begin{equation}
\label{eq:contwK_mu}
\begin{split}
\int_{B_{2r}\setminus B_r} \left|D^m_x K(x, y) - D^m_x K(x', y)\right|& \, dy \le C_\circ\frac{ |x-x'|^{\mu-m}}{r^{2s}},\\& 
\text{for any} \ x, x'\in \R^n, \ r > 0.
\end{split}
\end{equation}
%and if $\mu = m \in \N$,
%\begin{equation}
%\label{eq:contwK_mu__2}
%\begin{split}
%\int_{B_{2r}\setminus B_r} \left|D^m_x K(x, y)\right|& \, dy \le \frac{C_\circ}{r^{2s}}, \quad \text{for any} \ x \in \R^n, \ r > 0.
%\end{split}
%\end{equation} 
We can then define the corresponding semi-norm as the best possible constant $C_\circ$ satisfying such conditions for $\mu > 0$: 
\begin{equation}\label{Cmu-assumption_x}
[K]^x_\mu := \inf\left\{ C_ \circ > 0 : \eqref{eq:contwK_mu} \  \text{holds}. \right\}. 
\end{equation}

On the other hand, we can also impose a higher-order regularity in the $y$ variable. In this case, the analogous condition we will require is that, for a given $\mu > 0$ with $\lceil\mu-1\rceil = m$,
\begin{equation}\label{Cmu-assumption}
\begin{split}
\int_{B_{2r}\setminus B_r}\big|D^m_y K(x, z-y)-D^m_y K(x, z'-y)\big| & \, dy \le  C_\circ\frac{|z-z'|^{\mu-m}}{r^{2s+\mu}}\\
& \text{for all}\ r > 0, \ z, z'\in B_{r/2}, \ x\in \R^n.
\end{split}
\end{equation}
%and if $\mu = m\in \N$, 
%\begin{equation}\label{Cmu-assumption__2}
%\begin{split}
%\int_{B_{2r}\setminus B_r}\big|D^m_y K(x, z-y)\big| & \, dy \le  \frac{C_\circ}{r^{2s+m}} \ \text{for all}\ r > 0, \ z\in B_{r/2}, \ x\in \R^n.
%\end{split}
%\end{equation}
We denote 
\begin{equation}\label{Cmu-assumption_2}
[K]^y_\mu := \inf\left\{ C_ \circ > 0 : \eqref{Cmu-assumption}\  \text{holds}. \right\}. 
\end{equation}
If $\L_x$ is an operator of the form \eqref{eq:Lu1} with kernel $K = K(x, y)$, we denote
\begin{equation}
\label{eq:LKmu}
[\L_x]^y_\mu := [K]^y_\mu. 
\end{equation}

\begin{rem}
\label{rem:pointwise}
The previous conditions are satisfied when the kernels are H\"older continuous with an appropriate scaling. Namely, for condition \eqref{eq:contwK_mu} to hold it is enough to ask $[K]_{C_x^\mu(B_{2r}\setminus B_r)} \le Cr^{-2s-n}$ for all $r > 0$, and in the case of condition \eqref{Cmu-assumption} it is enough to have $[K]_{C_y^\mu(B_{2r}\setminus B_r)} \le Cr^{-2s-n-\mu}$ for all $r > 0$. These are the type of pointwise norms used, e.g., in \cite{CS11b, CS11}. 
\end{rem}

We then  define the classes  $\LL^\omega_s(\lambda, \Lambda;\mu)$ and $\LL^\mu_s(\lambda, \Lambda)$ for  $\mu  > 0$ as follows:
\begin{defi}
\label{defi:LL}
Let $s\in (0, 1)$, $0< \lambda \le \Lambda$, and let $\omega:\R^+\to\R^+$ be a modulus of continuity. We define, for $\mu > 0$, 
\[
\LL^\omega_s(\lambda, \Lambda; \mu) :=\big\{\L_x \in \LLL : [\L_x]^y_{\mu} < \infty\big\}.
\]
 We also denote $\LL^\omega_s(\lambda, \Lambda; 0) := \LL^\omega_s(\lambda, \Lambda) $.
 
On the other hand  we define, for a given $\mu > 0$, 
 \[
 \LL_s^\mu(\lambda, \Lambda) :=\big\{\L_x : 
\text{\eqref{eq:Lu1}-\eqref{eq:Kint1}-\eqref{eq:compdef} holds, and } [\L_x]^x_{\mu} < \infty\big\}
 \big\},
 \]
 and we denote $\LL_s^\infty(\lambda, \Lambda) = \bigcap_{\mu > 0}  \LL_s^\mu(\lambda, \Lambda)$. 
\end{defi}

%
%Notice that the class $\LLL$ is \emph{scale invariant}. 
%That is, for any $\L \in \LLL$ there is $\L_r\in \LLL$ such that
%\begin{equation}
%\label{eq:scaleinvariance_comp}\index{Scale invariance!Operators comparable to fractional Laplacian}
%\big(\L_r u(r\, \cdot\,)\big)(x) = r^{2s}(\L u)(rx)
%\end{equation}
%for every $u$ such that $\L u(rx)$ is well-defined. 
%More precisely, if $\L$ has kernel~$K$, then the kernel $K_r$ of $\L_r$ is given by $K_r(y) := r^{n+2s} K(ry)$. 

Of course, we also have the corresponding regular classes of fully nonlinear operators of the form \eqref{eq:Iofheform00}, $\II^\omega_s(\lambda, \Lambda; \mu)$ and  $\II^\mu_s(\lambda, \Lambda)$:

\begin{defi}
\label{defi:II}
Let $s\in (0, 1)$, $0< \lambda \le \Lambda$, and let $\omega:\R^+\to\R^+$ be a modulus of continuity. We define, for $\mu > 0$, 
\[
\II^\omega_s(\lambda, \Lambda; \mu) :=\big\{\I \in \IIL: [\I]^y_{\mu}<+\infty 
 \big\}.
\]
where for $\I \in \III$ we denote
\[
[\I]^y_{\mu} := \sup_{(a,b)\in \A\times\B} [\L_{ab, x}]^y_{\mu}.
\]
In particular, in the expression \eqref{eq:Iofheform00} we have that $\L_{ab, x}\in \LL^\omega_s(\lambda, \Lambda; \mu)$ for all $(a, b)\in \A\times \B$. When $\mu = 0$, we denote furthermore $\II^\omega_s(\lambda, \Lambda; 0) := \III$.

We also define, given $\mu > 0$, 
\[
\II^\mu_s(\lambda, \Lambda; \mu) :=\big\{\I \in \IIL: [\I]^x_{\mu}<+\infty 
 \big\}.
\]
where for $\I \in \III$ we denote
\[
[\I]^x_{\mu} := \sup_{(a,b)\in \A\times\B} [\L_{ab, x}]^x_{\mu}, 
\]
and    $\II_s^\infty(\lambda, \Lambda) = \bigcap_{\mu > 0}  \II_s^\mu(\lambda, \Lambda)$. 
\end{defi}

  The  extremal operators corresponding to the class $\LLL$ have a relatively simple closed expression: 
\begin{equation}
\label{eq:Mpexplicit}
\begin{split}
\mathcal{M}_{s, \lambda, \Lambda}^+u(x) = &\frac12  \int_{\R^n}\bigg\{  \Lambda\big(u(x+y)+u(x-y)-2u(x)\big)_+ \\
&\qquad\quad  - \lambda\big(u(x+y)+u(x-y)-2u(x)\big)_-\bigg\}\frac{dy}{|y|^{n+2s}},
\end{split}
\end{equation}
and 
\begin{equation}
\label{eq:Mpexplicit2}
\begin{split}
\mathcal{M}_{s, \lambda, \Lambda}^-u(x) = &\frac12  \int_{\R^n}\bigg\{  \lambda\big(u(x+y)+u(x-y)-2u(x)\big)_+ \\
& \qquad - \Lambda\big(u(x+y)+u(x-y)-2u(x)\big)_-\bigg\}\frac{dy}{|y|^{n+2s}}.
\end{split}
\end{equation}
Throughout this paper we will denote 
\begin{equation}
\label{eq:MMpm}
\Mp := \mathcal{M}_{s, \lambda, \Lambda}^+ \qquad\text{and}\qquad \Mm := \mathcal{M}_{s, \lambda, \Lambda}^-.
\end{equation}

The class $\II_s^\omega(\lambda, \Lambda)$ is uniformly elliptic with respect to $\LL_s^\omega(\lambda, \Lambda)$, that is, 
\[
\Mm v(x) \le \I(u+v, x) - \I(u, x) \le \Mp v(x). 
\]

\subsection{Preliminary results} The following are well-known results that we re-write here for the convenience of the reader. The first is a result regarding the regularity of $\L_x u$ when $u$ is regular.  

\begin{lem}
\label{lem:Lu_2}
Let $s\in (0, 1)$ and let $\mu$ with $\mu\not\in\mathbb N$. Then, if $\L_x\in \LL^\mu_s(\lambda, \Lambda)$,  for any $u\in C^{2s+\mu}(B_1)\cap C^\mu(\R^n)$ we have $\L_x u\in C_{\rm loc}^\mu(B_{1})$ and
\[
\|\L_x u\|_{C^\mu (B_{1/2})} \le C \Lambda \left(\|u\|_{C^{2s+\mu}(B_1)} +\|u\|_{C^{\mu}(\R^n)}\right),
\]
with $C$ depending only on $n$, $s$,  $[\L_x]_{\mu}^x$, and $\mu$.
\end{lem}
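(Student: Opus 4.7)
My plan is to estimate $\|\L_x u\|_{C^\mu(B_{1/2})}$ by proving the lemma first in the base case $\mu\in(0,1)$, and then by inducting on $m=\lceil\mu-1\rceil$. The $L^\infty$ part of the norm follows by splitting the defining integral at $|y|=1/4$: on $\{|y|\le 1/4\}$ the $C^{2s+\mu}(B_1)$-regularity of $u$ together with the symmetry $K(x,y)=K(x,-y)$ (which kills odd-order Taylor terms of $u$) yields
\[
|2u(x)-u(x+y)-u(x-y)|\le C\|u\|_{C^{2s+\mu}(B_1)}|y|^{\min(2,2s+\mu)},
\]
whose integral against $|y|^{-n-2s}$ converges since $s<1$; on $\{|y|>1/4\}$ one uses $|2u(x)-u(x+y)-u(x-y)|\le 4\|u\|_{L^\infty}$ and tail integrability of $K$. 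For the $C^\mu$ seminorm I fix $x,x'\in B_{1/2}$, set $h=x-x'$, and decompose
\[
\L_x u(x)-\L_{x'}u(x')=\underbrace{(\L_x-\L_{x'})u(x)}_{I_1}+\underbrace{\L_{x'}u(x)-\L_{x'}u(x')}_{I_2},
\]
handling the ``kernel change'' $I_1$ and the translation-invariant ``point change'' $I_2$ separately.

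For $I_1$ I would use the dyadic decomposition $A_k=B_{2^{-k+1}}\setminus B_{2^{-k}}$ together with the hypothesis $[K]^x_\mu<\infty$. For the base case $\mu\in(0,1)$, the estimate \eqref{eq:contwK_mu} (with $m=0$) directly yields $\int_{A_k}|K(x,y)-K(x',y)|\,dy\le [K]^x_\mu|h|^\mu 2^{2sk}$. Multiplying by the $A_k$-bound $|2u(x)-u(x+y)-u(x-y)|\le C\min\bigl(\|u\|_{C^{2s+\mu}(B_1)}2^{-k\min(2,2s+\mu)},\,4\|u\|_{L^\infty}\bigr)$ and summing the resulting geometric series over $k\in\Z$ (which converges because $\mu>0$ for the near-origin tail and $s>0$ for the far-field tail) gives $|I_1|\le C|h|^\mu[K]^x_\mu\bigl(\|u\|_{C^{2s+\mu}(B_1)}+\|u\|_{L^\infty(\R^n)}\bigr)$.

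For $I_2=\tfrac12\int\bigl[(2u(x)-u(x+y)-u(x-y))-(2u(x')-u(x'+y)-u(x'-y))\bigr]K(x',y)\,dy$ I would partition the $y$-integral into $\{|y|\le |h|\}$, $\{|h|<|y|\le 1/4\}$, and $\{|y|>1/4\}$. On the inner region, the triangle inequality combined with the $C^{2s+\mu}(B_1)$-Taylor estimate for $u$ yields $C\|u\|_{C^{2s+\mu}(B_1)}|y|^{2s+\mu}$, which integrates to $C|h|^\mu$. On the middle region I apply a mean value estimate in $x$ to the second difference $x\mapsto 2u(x)-u(x+y)-u(x-y)$: when $2s+\mu>1$ this yields a factor $|h|\,\|u\|_{C^{2s+\mu}(B_1)}|y|^{2s+\mu-1}$, which integrated against $|y|^{-n-2s}$ produces $C|h|^\mu$ (since $\mu<1$); when $2s+\mu\le 1$ one uses instead $C\|u\|_{C^{2s+\mu}(B_1)}|h|^{2s+\mu}$ with a $|h|^{-2s}$-integration. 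On the outer region the global $C^\mu(\R^n)$-regularity of $u$ supplies a bound $\le 6[u]_{C^\mu(\R^n)}|h|^\mu$, integrating to a $C\Lambda|h|^\mu$ contribution.

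For general $\mu\notin\mathbb N$ the plan is to iterate the base estimate by induction on $m=\lceil\mu-1\rceil$: differentiating the integrand defining $\L_x u(x)$ in $x$ and using the product rule gives
\[
\partial_{x_i}(\L_x u)(x)=\L_x(\partial_{x_i}u)(x)+\widetilde{\L}^{(i)}_x u(x),
\]
where $\widetilde{\L}^{(i)}_x$ carries kernel $\partial_{x_i}K(x,y)$; the first term falls under the inductive hypothesis applied to $\partial_i u\in C^{2s+(\mu-1)}(B_1)$, while the second is estimated via $[K]^x_\mu$ in the same dyadic spirit as $I_1$. The main obstacle I foresee is the regime $2s+\mu>2$ inside the base argument for $I_2$: the surviving second-order Taylor term $D^2u(x)(y,y)$ does not cancel against the even kernel, and the naive estimate only yields a Hölder exponent $2s+\mu-2$, strictly smaller than the target $\mu$ (hence a weaker bound than $|h|^\mu$ for $|h|\le 1$). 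Circumventing this requires subtracting $D^2u(x)(y,y)\chi(|y|)$ from the second difference, absorbing the auxiliary tensor $\int(y\otimes y)\chi(|y|)K(x',y)\,dy$ as a zero-order coefficient, and pushing the missing regularity into the induction step $m\ge 1$, where the $x$-regularity of $D^2u$ becomes directly exploitable.
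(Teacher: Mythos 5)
Your route is genuinely different from the paper's. The paper localizes ($u=\eta u+(1-\eta)u$), measures $C^\mu$ regularity of $\mathcal{L}u_1$ through \emph{second-order} incremental quotients $|\mathcal{L}u_1(x)+\mathcal{L}u_1(-x)-2\mathcal{L}u_1(0)|\lesssim|x|^\mu$, and feeds in explicit fourth-difference estimates of $u_1$; you instead work with first differences, split into a ``kernel change'' term $I_1$ (handled dyadically via \eqref{eq:contwK_mu}, which is correct and in fact cleaner than the paper's one-line treatment of the non-translation-invariant part) and a ``point change'' term $I_2$ (handled by a three-zone splitting at scales $|h|$ and $1/4$), and induct on $m$. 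Your $L^\infty$ bound, the $I_1$ estimate, the middle and outer zones of $I_2$, and the outline of the induction step are all fine.

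The one genuine gap is exactly the one you flag, but your proposed circumvention does not work. In the base case $\mu\in(0,1)$ with $2s+\mu>2$ (e.g.\ $s=0.9$, $\mu=0.5$), the inner zone $\{|y|\le|h|\}$ cannot be handled by the triangle inequality, as you note; but subtracting $D^2u(x)(y,y)\chi(|y|)$ with a \emph{fixed} cutoff leaves the frozen-coefficient term $\tfrac12\big(D^2u(x)-D^2u(x')\big)\colon\!\int (y\otimes y)\chi(|y|)K(x',y)\,dy$, which is only $O\big([D^2u]_{C^{2s+\mu-2}}|h|^{2s+\mu-2}\big)$ with $2s+\mu-2<\mu$ --- a strictly weaker modulus --- and ``pushing the missing regularity into the induction step $m\ge1$'' is vacuous here since $m=0$. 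The correct fix is simpler and requires no subtraction: estimate the \emph{difference} of second differences directly. From $u(x+y)+u(x-y)-2u(x)=\int_0^1(1-t)\big[D^2u(x+ty)+D^2u(x-ty)\big](y,y)\,dt$ one gets, for $2<2s+\mu<3$,
\[
\big|\big(2u(x)-u(x+y)-u(x-y)\big)-\big(2u(x')-u(x'+y)-u(x'-y)\big)\big|\le C\,[D^2u]_{C^{2s+\mu-2}}\,|y|^2\,|h|^{2s+\mu-2},
\]
and integrating $|y|^{2-n-2s}$ over $\{|y|\le|h|\}$ yields $C|h|^{2-2s}|h|^{2s+\mu-2}=C|h|^\mu$. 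This is precisely the paper's second displayed fourth-difference estimate, specialized to your inner zone; with it in place of your triangle-inequality bound, your argument closes.
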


\begin{proof} The proof of this result is standard. We briefly sketch the main steps for completeness, and refer to \cite{FR23} for more details. 

Let us assume first that $\L_x = \L$ is translation invariant, with kernel $K(y)$ satisfying $|K(y)|\le \Lambda |y|^{-n-2s}$ in $\R^n$.  We fix a cut-off function $\eta\in C^\infty_c(\R^n)$ such that $\eta \ge 0$, $\eta \equiv 0$ in $\R^n\setminus B_{3/4}$ and $\eta \equiv 1$ in $B_{2/3}$, and define 
\[
u_1 := \eta u\qquad\text{and}\qquad u_2 := (1-\eta) u,
\]
and we bound separately the regularity of $\L u_1$ and $\L u_2$. 

To bound   $\|\L u_1\|_{C^\mu(B_{1/2})}$ we use $u_1\in C^{2s+\mu}(\R^n)$ with $\|u_1\|_{C^{2s+\mu}(\R^n)}\le C$. Then, if $ \mu< 1$, a  direct computation shows that
\[
\begin{split}
|u_1(x+y)+u_1(x-y)-2u_1(x)| &\le  C |y|^{2s+\mu},\\
|u_1(x)+u_1(-x)-2u_1(0)| &\le  C |x|^{2s+\mu},\\
|u_1(x\pm y)+u_1(-x\pm y)-2u_1(\pm y)| &\le  C |x|^{2s+\mu}.
\end{split}
\]
for $2s+\mu \le 2$, and 
\[
\begin{split}
& \big|u_1(x+y)+u_1(x-y)-2u_1(x)-u_1(y)-u_1(-y)+2u_1(0)\big| \leq  |y|^2 |x|^{2s+\mu-2},\\
& \big|u_1(x+y)+u_1(-x+y)-2u_1(y)-u_1(x)-u_1(-x)+2u_1(0)\big| \leq  |x|^2 |y|^{2s+\mu-2}
\end{split}
\]
if  $2<2s+\mu<3$. Together with the bounds on the kernel, this directly yields
\begin{equation}
\label{eq:weobtainagain}
\big|\L u_1(x) + \L u_1(-x) - 2 \L u_1(0) \big| \le C \Lambda |x|^\mu, 
\end{equation}
which is what we wanted (repeating around every point in $B_{1/2}$). On the other hand, if $\mu = k +\beta$ with $k \in \N$, $\beta\in (0, 1)$, we take $k$ derivatives of $\L u_1$ and repeat the arguments above, to obtain $[D^k \L u_1]_{C^\beta(B_{1/2})} \le C$. 

For the bound on $\|\L u_2\|_{C^\mu(B_{1/2})}$ (with  $\mu = k+\beta$ as above), we have 
\[
\begin{split}
\left|D^k \L u_2(x) - D^k \L u_2(0) \right|& = \left|\L D^k u_2(x) - \L D^k u_2(0) \right| \le C |x|^{\beta},
 \end{split}
\]
using that $D^k u_2\in C^\beta$  and the fact that $u_2 \equiv 0$ in $B_{2/3}$. 

Finally, to do the general non-translation invariant case, given a fixed point $x_i$ we denote $\L_{x_i}$ the translation invariant operator with kernel $K(x_i, y)$. Thus, if $\mu < 1$ we can write for $x_1, x_2\in B_{1/2}$, 
\[
|\L_{x_1} u(x_1) - \L_{x_2} u(x_2)|\le |\L_{x_1} u(x_1) - \L_{x_1} u(x_2)| + |\L_{x_1} u(x_2) - \L_{x_2} u(x_2)|,
\]
where the first term can be bounded as before, and the second term is bounded thanks to \eqref{eq:contwK_mu} and the fact that $u$ is $C^{2s+\mu}$ around $x_2$. If $\mu > 1$, we use the same reasoning, by taking the derivatives of $\L_x u(x)$ and using the bound \eqref{eq:contwK_mu} again.
\end{proof}

The following, is a direct consequence of the comparison principle for the extremal operators:  

\begin{lem}
\label{lem:Linftybound_visc} 
Let $s\in (0,1)$ and   $\Mpm $ be given by \eqref{eq:MMpm}. Let $u\in C(B_1)\cap L^\infty(\R^n)$ be a viscosity solution of 
\begin{equation}
\label{eq:sol_u_g}
\left\{
\begin{array}{rcll}
\Mp u & \ge & -C_\circ & \quad\text{in}\quad B_1,\\
\Mm u & \le & C_\circ & \quad\text{in}\quad B_1.
\end{array}
\right.
\end{equation}
Then, 
\[
\|u\|_{L^\infty(B_1)} \le \|u\|_{L^\infty(\R^n\setminus B_1)} + C C_\circ,
\]
for some constant $C$ depending only on $n$, $s$, $\lambda$, and $\Lambda$. 
\end{lem}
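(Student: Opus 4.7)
My plan is to deduce the estimate from the weak maximum principle for $\Mp$-subsolutions combined with a fixed Perron-type barrier. First, I observe that the pair of inequalities in \eqref{eq:sol_u_g} is symmetric under $u\mapsto -u$, since $\Mp(-u)=-\Mm u$ and $\Mm(-u)=-\Mp u$. Hence it suffices to prove the one-sided bound $u\le M + CC_\circ$ in $B_1$, where $M:=\|u\|_{L^\infty(\R^n\setminus B_1)}$; applying the same estimate to $-u$ then yields the matching lower bound. Since $\Mp$ (and $\Mm$) annihilates constants, the translated function $w:=u-M$ satisfies $w\le 0$ on $\R^n\setminus B_1$ and $\Mp w\ge -C_\circ$ in $B_1$ in the viscosity sense.

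Second, I fix once and for all a barrier $\phi:\R^n\to[0,\infty)$, supported in $\overline{B_2}$, with $\Mp\phi\le -1$ in $B_1$ and $\|\phi\|_{L^\infty(\R^n)}\le C_0$, where $C_0=C_0(n,s,\lambda,\Lambda)$. Such a $\phi$ is provided, for instance, by the Perron viscosity solution of the Dirichlet problem $\Mp\phi=-1$ in $B_2$ with zero exterior data; its existence and quantitative $L^\infty$ bound follow from standard comparison against an explicit smooth supersolution (or from an ABP-type estimate for the extremal operator). I regard this barrier construction as the main technical point of the proof, but it is by now classical in the theory of extremal nonlocal operators.

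Third, I compare $w$ against $C_\circ\phi$. Using the sublinearity $\Mp(f+g)\le \Mp f+\Mp g$ applied at smooth viscosity test functions, the function $v:=w-C_\circ\phi$ is a viscosity subsolution of $\Mp v\ge 0$ in $B_1$: indeed, if a smooth $\varphi$ touches $v$ from above at $x_0\in B_1$, then $\varphi+C_\circ\phi$ touches $w$ from above at $x_0$, so $\Mp(\varphi+C_\circ\phi)(x_0)\ge -C_\circ$, which combined with $\Mp(\varphi+C_\circ\phi)(x_0)\le \Mp\varphi(x_0)+C_\circ\Mp\phi(x_0)$ gives $\Mp\varphi(x_0)\ge -C_\circ-C_\circ\Mp\phi(x_0)\ge 0$. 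Moreover, $v\le 0$ on $\R^n\setminus B_1$ since $w\le 0$ and $\phi\ge 0$ there. The weak maximum principle for $\Mp$-subsolutions then forces $v\le 0$ in $B_1$, i.e., $u-M\le C_\circ\phi\le C_0 C_\circ$ in $B_1$, which is the desired estimate with $C=C_0$.
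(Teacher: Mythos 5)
Your overall strategy is exactly the paper's (the paper's proof is a one-liner: apply the maximum principle to $u\pm CC_\circ\chi_{B_2}$), and the symmetry reduction, the translation by $M$, and the final comparison are all fine. There is, however, one genuine gap in your third step: you take the barrier $\phi$ to be the \emph{Perron viscosity solution} of $\Mp\phi=-1$ in $B_2$, which is only known to be continuous, and then you (a) use $\varphi+C_\circ\phi$ as a test function for the viscosity inequality $\Mp w\ge -C_\circ$, and (b) evaluate $\Mp\phi(x_0)$ pointwise in the chain $\Mp(\varphi+C_\circ\phi)(x_0)\le\Mp\varphi(x_0)+C_\circ\Mp\phi(x_0)\le\Mp\varphi(x_0)-C_\circ$. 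Both steps require $\phi$ to be $C^{1,1}$ (or at least twice punctually differentiable) near $x_0$: a viscosity test function must be smooth in a neighbourhood of the touching point, and pointwise subadditivity of $\Mp$ only makes sense where both summands are classically evaluable. Justifying this for the Perron solution would require regularity theory far heavier than the lemma itself; alternatively one could invoke the Caffarelli--Silvestre theorem that the difference of a viscosity subsolution and a viscosity supersolution is a subsolution of the extremal equation, but you do not do so.

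The fix is immediate and makes your ``main technical point'' trivial: there is no need to solve a Dirichlet problem. Take $\phi=\chi_{B_2}$ (as the paper does), or a smooth bump $\eta$ with $0\le\eta\le1$, $\eta\equiv1$ on $B_{3/2}$, $\operatorname{supp}\eta\subset B_2$. For $x\in B_1$ every second difference $\eta(x+y)+\eta(x-y)-2\eta(x)$ is $\le 0$, and it equals $-2$ for $|y|\ge 3$, so
\[
\Mp\eta(x)\le-\lambda\int_{\{|y|\ge 3\}}|y|^{-n-2s}\,dy=:-c(n,s,\lambda)<0
\]
classically in $B_1$. With this explicit barrier (rescaled by $C_\circ/c$), $\varphi+(C_\circ/c)\eta$ is an admissible smooth test function (in the case $\phi=\chi_{B_2}$ it is smooth near $x_0$ and bounded globally, which suffices), all pointwise evaluations are legitimate, and the rest of your argument goes through verbatim with $C=1/c$.
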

\begin{proof}
The proof is standard, by applying the maximum principle to the functions $u \pm C C_\circ \chi_{B_2}$. 
\end{proof}

The next result is on the interior regularity of solutions to non-divergence-form equations with bounded measurable coefficients:

\begin{thm}[\cite{Sil06, CS09}]\label{C^alpha-bmc}
Let $s\in(0,1)$ and let $\Mpm $ be given by \eqref{eq:MMpm}. Let $u\in C(B_1)\cap L^\infty(\R^n)$   be any viscosity solution to  a non-divergence-form equation with bounded measurable coefficients, i.e.,
\[\left\{
\begin{array}{rcll}
\Mp u &\geq & -C_\circ &\quad \textrm{in}\quad B_1\\
\Mm u &\leq & C_\circ &\quad \textrm{in}\quad B_1,
\end{array}
\right.\]
for some~$C_\circ\geq0$.
Then $u\in C^\gamma_{\rm loc}(B_1)$ with 
\[\|u\|_{C^{\gamma}(B_{1/2})}\leq C\left(\|u\|_{L^\infty(\R^n)}+C_\circ\right),\]
where $C$ and $\gamma>0$ depend only on $n$, $s$,  $\lambda$, and $\Lambda$.
\end{thm}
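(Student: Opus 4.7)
My plan is to follow the nonlocal Krylov--Safonov program pioneered by Silvestre \cite{Sil06} and refined in Caffarelli--Silvestre \cite{CS09}, deriving $C^\gamma$ regularity from a diminish-of-oscillation iteration. After the normalization $\|u\|_{L^\infty(\R^n)}+C_\circ \le 1$ (by linearity of the extremal operators and rescaling), the goal is to produce universal constants $\theta\in(0,1)$ and $\gamma>0$ such that
\[
\operatorname*{osc}_{B_{r/2}} u \le (1-\theta)\operatorname*{osc}_{B_r}u + C r^\gamma\qquad\text{for all } r\in(0,1/2).
\]
A routine dyadic iteration of this inequality yields the stated interior $C^\gamma$ estimate at any point of $B_{1/2}$.

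The heart of the proof is a measure-decay / point estimate of Krylov--Safonov type: there exist universal $\eps_0,M,\mu_0>0$ such that if $v\ge 0$ on $\R^n$ with $\Mm v \le \eps_0$ in $B_1$ and $\inf_{B_{1/2}} v\le 1$, then
\[
|\{v\le M\}\cap B_1|\ge \mu_0.
\]
I would establish this using an explicit radial barrier $\Phi$, supported in a fixed larger ball, strictly negative on $B_1$, and smooth enough that $\Mp\Phi$ can be computed pointwise from the explicit formulas \eqref{eq:Mpexplicit}--\eqref{eq:Mpexplicit2}. The barrier is combined with a nonlocal ABP-type estimate (as in \cite{CS09}) applied to the function $(v+\Phi)_-$ and a Calder\'on--Zygmund covering in cubes, which quantifies the portion of $B_1$ on which $v$ must remain below a universal threshold.

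Given the point estimate and its symmetric version (applied to $M-u$ using $\Mp u \ge -C_\circ$), the diminish-of-oscillation step is essentially the classical one: after normalizing $\operatorname{osc}_{B_r}u = 1$ and translating the midpoint to $0$, either $\{u\ge 0\}$ or $\{u\le 0\}$ has substantial measure in $B_r$, and the point estimate upgrades the corresponding one-sided bound on a smaller ball $B_{r/2}$ by a definite amount, producing the factor $(1-\theta)$.

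The main obstacle, and the genuinely nonlocal feature, is the tail contribution. When rescaling $u$ to unit scale at radius $r$, the operator $\Mpm$ applied to the rescaled function inside $B_1$ still integrates values of $u$ across all of $\R^n$, and these values are of order $\operatorname{osc}_{\R^n} u$, not $\operatorname{osc}_{B_r}u$. Handling this requires truncating $u$ to $B_r$ and treating the tail as a bounded forcing term: since $K(x,y)\le \Lambda|y|^{-n-2s}$, the tail contribution to $\Mpm$ of the truncated error is $O(r^{2s})$ (times $\|u\|_{L^\infty(\R^n)}$), which is precisely what gets absorbed into the remainder $Cr^\gamma$ in the diminish-of-oscillation step and determines the final exponent $\gamma$. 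Carrying this accounting through the iteration, with constants depending only on $n,s,\lambda,\Lambda$, yields the claimed Hölder estimate.
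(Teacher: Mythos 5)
First, a point of reference: the paper does not prove Theorem~\ref{C^alpha-bmc} at all --- it is quoted from \cite{Sil06, CS09} as a known result --- so your proposal has to be measured against the argument in those references. Your roadmap (normalization, diminish of oscillation, a Krylov--Safonov point estimate built from an explicit barrier, a nonlocal ABP estimate, and a Calder\'on--Zygmund covering upgraded to an $L^\eps$-type measure estimate) is the correct and standard one, and it is indeed the strategy of the cited papers.

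There is, however, a genuine gap precisely at the step you yourself flag as ``the main obstacle'': the tail accounting is quantitatively wrong, and the fix you propose does not close the iteration. If you truncate and write $u = u\chi_{B_r} + u\chi_{\R^n\setminus B_r}$, then for $x\in B_{r/2}$ the contribution of the exterior piece to $\Mpm$ is of order $\Lambda\int_{|y|\ge r/2}\|u\|_{L^\infty(\R^n)}|y|^{-n-2s}\,dy \sim \|u\|_{L^\infty(\R^n)}\,r^{-2s}$, not $O(r^{2s})\|u\|_{L^\infty(\R^n)}$; after rescaling to unit scale it becomes $O(\|u\|_{L^\infty(\R^n)})$, a quantity of order one. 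But the point estimate at scale $r$ tolerates a forcing of size at most $\eps_0\cdot\operatorname{osc}_{B_r}u$ (in the rescaled, normalized variables), and $\|u\|_{L^\infty(\R^n)}\gg \operatorname{osc}_{B_r}u$ as soon as the oscillation has decayed, so the tail cannot be ``absorbed into the remainder $Cr^\gamma$'': the iteration breaks down after finitely many scales. The actual resolution in \cite{Sil06, CS09} is to strengthen the induction hypothesis: one constructs $m_k\le u\le M_k$ in $B_{2^{-k}}(x_0)$ with $M_k-m_k=2^{-\gamma k}$ while simultaneously recording that $u$ remains inside the cone $|u-u(x_0)|\le 2^{-\gamma k}\bigl(2^{k}|x-x_0|\bigr)^{\gamma}$ outside $B_{2^{-k}}(x_0)$ (capped by $2\|u\|_{L^\infty(\R^n)}$ far away). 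With this dyadic-annulus control the rescaled tail is bounded by $\sum_{j\ge0}2^{(\gamma-2s)j}$ times the oscillation at the current scale, which converges precisely because one chooses $\gamma<2s$ --- this is also where the extra constraint on $\gamma$, beyond the one coming from the measure estimate, originates. You should replace the truncation argument by this strengthened induction; the rest of your outline then goes through as in the references.
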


And the corresponding result regarding the regularity up to the boundary:

\begin{lem}[\cite{CS11}]
\label{lem:bdry_reg_int}
Let $s\in (0, 1)$, and let $g\in L^\infty(\R^n)\cap C^\alpha(B_2)$ for some $\alpha > 0$. Let $\Mpm$ be given by \eqref{eq:MMpm}, and let $u\in C(\Omega)\cap L^\infty(\R^n)$ be a viscosity solution of 
\[
\left\{
\begin{array}{rcll}
\Mp u & \ge & -C_\circ & \quad\text{in}\quad B_1,\\
\Mm u & \le & C_\circ & \quad\text{in}\quad B_1,\\
u & = & g & \quad\text{in}\quad \R^n\setminus B_1.
\end{array}
\right.
\]
Then, $u\in C^\delta(\overline{B_1})$ for some $\delta >0$ depending only on $\alpha$, $n$, $s$, $\lambda$, and $\Lambda$. 
\end{lem}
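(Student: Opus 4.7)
The plan is to combine a barrier argument at $\partial B_1$ with the interior $C^\gamma$ estimate of Theorem \ref{C^alpha-bmc}. By homogeneity of $\Mpm$, I first normalize so that $\|u\|_{L^\infty(\R^n)}+\|g\|_{C^\alpha(B_2)}+C_\circ \le 1$. The target is a pointwise boundary modulus: for some $\delta>0$ depending only on $\alpha,n,s,\lambda,\Lambda$,
\begin{equation*}
|u(x)-g(x_0)|\le C|x-x_0|^\delta,\qquad x_0\in\partial B_1,\ x\in B_1.
\end{equation*}
Once this is established, the global $C^\delta(\overline{B_1})$ estimate follows from a standard interpolation with the interior bound: for an interior point $x\in B_1$ write $d:=\dist(x,\partial B_1)$ and let $x_0$ be its projection onto $\partial B_1$; the boundary estimate controls $\|u-g(x_0)\|_{L^\infty(B_{d/2}(x))}\le Cd^\delta$, while Theorem \ref{C^alpha-bmc} applied to the rescaled function in $B_{d/2}(x)$ gives its $C^\gamma$ oscillation at scale $d/2$. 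Combining the two yields the desired estimate on $\overline{B_1}$, possibly for a smaller $\delta$.

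For the boundary bound I use the barrier $\varphi(x):=(1-|x|^2)_+^\delta$. The key property, for $\delta>0$ small depending only on $n,s,\lambda,\Lambda$, is
\begin{equation*}
\Mm\varphi(x)\le -c_0<0 \quad\text{in } B_1\setminus B_{1/2}, \qquad \varphi\equiv 0 \text{ in } \R^n\setminus B_1,
\end{equation*}
which follows from an explicit computation; see \cite{CS11}. Fixing $x_0\in\partial B_1$ and setting $K:=\max\{[g]_{C^\alpha(B_2)},2\}$, define $h(x):=\min(K|x-x_0|^\alpha,2)$, so that $g(x_0)+h(x)\ge g(x)$ on $B_2$ and $g(x_0)+h(x)\ge \|u\|_{L^\infty(\R^n)}$ outside $B_2$. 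The competitor
\begin{equation*}
\Psi(x):=g(x_0)+h(x)+M\varphi(x)
\end{equation*}
then satisfies $\Psi\ge u$ on $\R^n\setminus(B_1\setminus B_{1/2})$ provided $M$ is large enough, using $\varphi\ge c>0$ on $B_{1/2}$. In $B_1\setminus B_{1/2}$, $\Mp h$ is bounded (since $h$ is a bounded $C^\alpha$ function with controlled Hölder seminorm) while $\Mm\varphi\le -c_0$; hence for $M$ large one has $\Mp(\Psi-u)\le 0$. The comparison principle built on Lemma \ref{lem:Linftybound_visc} then forces $u\le\Psi$ in $B_1$, giving $u(x)-g(x_0)\le h(x)+M\varphi(x)\le C|x-x_0|^\delta$ with $\delta\le\alpha$. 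The lower bound follows symmetrically from $-\varphi$.

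The main obstacle is the barrier inequality $\Mm\varphi\le -c_0<0$ on $B_1\setminus B_{1/2}$. The tangential concavity of $\varphi$ near $\partial B_1$ ensures that the $\lambda$-weighted positive part of the second increment is controlled, while increments $y$ that straddle $\partial B_1$ produce a genuinely negative second difference $\varphi(x+y)+\varphi(x-y)-2\varphi(x)$ whose $\Lambda$-weighted singular integral dominates. The delicate point is choosing $\delta$ small enough in terms of $s$ and the ellipticity ratio $\Lambda/\lambda$ so that this negative contribution wins; the computation is classical (carried out in \cite{CS11}), but for small $s$ it requires a careful asymptotic analysis of $\varphi$ near the boundary and is the genuinely nontrivial step.
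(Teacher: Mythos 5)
The paper offers no proof of this lemma (it is quoted from \cite{CS11}), and your strategy --- a power-of-the-distance barrier at $\partial B_1$, a comparison argument, and interpolation with the interior estimate of Theorem~\ref{C^alpha-bmc} --- is indeed the strategy behind the cited result. However, two of your key steps do not hold as written. First, the comparison requires the barrier to be a strict supersolution of the \emph{maximal} operator: to deduce $u\le\Psi$ from $u\le\Psi$ outside the annulus one uses $\Mp(u-\Psi)\ge \Mp u-\Mp\Psi\ge -C_\circ-\Mp\Psi$, so one needs $\Mp\Psi\le -C_\circ$, hence $\Mp\varphi\le -c_0$, not merely $\Mm\varphi\le -c_0$. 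Your heuristic justification (positive second differences weighted by $\lambda$, negative ones by $\Lambda$) is exactly the computation for $\Mm$, which is the weaker and unusable inequality; for $\Mp$ the positive increments carry the weight $\Lambda$ and the negative ones only $\lambda$, and showing that the negative mass still wins for $\delta$ small (depending on $\Lambda/\lambda$ and $s$) is precisely the nontrivial content of the barrier lemma.

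Second, the claim that $\Mp h$ is bounded ``since $h$ is a bounded $C^\alpha$ function'' is false whenever $\alpha\le 2s$: the second differences of $h=\min(K|x-x_0|^\alpha,2)$ are only $O(|y|^\alpha)$ near $x_0$, so the defining integral is not absolutely convergent there, and at points $x$ of the annulus close to $x_0$ one only gets $\Mp h(x)\le CK\,|x-x_0|^{\alpha-2s}$, a quantity that may blow up with the \emph{positive} sign (for instance, $\Mp(|z|^\alpha)=C_*|z|^{\alpha-2s}$ by homogeneity, and $C_*>0$ cannot be excluded since $|z|^\alpha$ has $n-1$ convex directions). Then $\Mp\Psi\le -C_\circ$ fails near $x_0$ for every fixed $M$, and the comparison breaks exactly where the boundary modulus is being established. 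The standard repairs are either to prove the quantitative rate $\Mp\varphi(x)\le -c\,\dist(x,\partial B_1)^{\delta-2s}$ and take $\delta\le\alpha$, so that $M\Mp\varphi$ absorbs the blow-up of $\Mp h$ (note $\dist(x,\partial B_1)\le|x-x_0|$), or to run a dyadic iteration on balls $B_{2^{-k}}(x_0)$, controlling at each scale the oscillation of the exterior datum and the nonlocal tail, which avoids evaluating $\Mp h$ altogether. A related tail issue is glossed over in your final interpolation: applying Theorem~\ref{C^alpha-bmc} to the rescaled function on $B_{d/2}(x)$ involves the \emph{global} $L^\infty$ norm of $u-g(x_0)$, and one must invoke the boundary growth at all dyadic scales to bound the tail contribution by $C d^\delta$ (which works only for $\delta<2s$).
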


We will also make use of the notion of weak convergence of nonlocal operators, in particular to study limits of viscosity solutions; see \cite[Lemma 4.3]{CS11}.

\begin{defi}[Weak convergence of operators]
\label{defi:conv_I}
Let $s\in (0, 1)$. Let $(\I_k)_{k\in \N}$ be a sequence of operators with $\I_k\in \II^\omega_s(\lambda, \Lambda)$ and let $\I\in \II^\omega_s(\lambda, \Lambda)$. We say that $\I_k$ weakly converges to $\I$ in $\Omega$, and we denote it 
\[
\I_k\rightharpoonup \I\quad\text{in}\quad \R^n,
\] if for every $x_\circ\in \Omega$ and every function $v\in L^\infty(\R^n)$ such that $v$ is $C^2$ in  $B_r(x_\circ) \subset \Omega$, we have $\I_k(v, x) \to \I(v, x)$ uniformly in $B_{r/2}(x_\circ)$. 
\end{defi}

Finally, we also recall the following classical result on the interior regularity of solutions to equations with the fractional Laplacian (see, for example, \cite{RS16}): 
\begin{prop}[Interior estimates for viscosity solutions of $\fls$]
\label{prop:viscosity_fls}
Let $s\in (0, 1)$, and let $f\in C^\theta(B_1)$ for some $\theta\in [0, 1)$. Let $u\in C(B_1) \cap L^\infty(\R^n)$ satisfy
\[
\fls u = f \quad \text{in}\quad B_1
\]
in the viscosity sense. Then, if $2s+\theta\notin \N$,  $u\in C_{\rm loc}^{2s+\theta}(B_1)$ with 
\[
\|u\|_{C^{2s+\theta}(B_{1/2})}\le C \left(\|u\|_{L^\infty(\R^n)} + \|f\|_{C^\theta(B_1)}\right),
\]
for some $C$ depending only on $n$, $s$, and $\theta$. If $\theta = 0$ and $s = \frac12$, then $u\in C^{1-\delta}(B_1)$ for any $\delta > 0$. 
\end{prop}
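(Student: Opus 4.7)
My plan is to split $u$ on $B_{3/4}$ as $u=v+w$, where $v$ is a particular solution built from $f$ via the Riesz potential and $w$ is an $s$-harmonic remainder. I would handle $v$ by classical Schauder-type estimates for $(-\Delta)^{-s}$, and $w$ by interior smoothing for $s$-harmonic functions; the two ingredients fit together because the problem is linear and translation-invariant.

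Concretely, pick a cutoff $\eta\in C^\infty_c(\R^n)$ with $\eta\equiv 1$ on $B_{3/4}$ and $\mathrm{supp}\,\eta\subset B_1$, and use Whitney extension to produce $\tilde f\in C^\theta(\R^n)$ with $\tilde f=f$ on $B_{3/4}$, $\mathrm{supp}\,\tilde f\subset B_1$, and $\|\tilde f\|_{C^\theta(\R^n)}\le C\|f\|_{C^\theta(B_1)}$. Define
\[
v(x):= c_{n,s}\int_{\R^n}\frac{\tilde f(y)}{|x-y|^{n-2s}}\,dy,
\]
which is bounded since $\tilde f$ is bounded and compactly supported. The pointwise fractional Schauder estimate for the Riesz potential gives $v\in C^{2s+\theta}_{\rm loc}(\R^n)$ with $\|v\|_{C^{2s+\theta}(B_1)}\le C\|f\|_{C^\theta(B_1)}$ provided $2s+\theta\notin\N$, and $\fls v=f$ classically on $B_{3/4}$. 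Setting $w:=u-v$, linearity of the viscosity criterion gives $\fls w=0$ in $B_{3/4}$ in the viscosity sense, with $w\in L^\infty(\R^n)$ and $\|w\|_{L^\infty(\R^n)}\le\|u\|_{L^\infty(\R^n)}+C\|f\|_{C^\theta(B_1)}$.

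Next I would show $w\in C^\infty(B_{1/2})$ with $\|w\|_{C^k(B_{1/2})}\le C_k\|w\|_{L^\infty(\R^n)}$. The quickest route is the Poisson representation for $s$-harmonic functions in a ball, which expresses $w$ in $B_{3/4}$ as an explicit smooth integral against the data $w|_{\R^n\setminus B_{3/4}}$ and yields interior smoothness with the claimed estimate. Alternatively one bootstraps: Theorem~\ref{C^alpha-bmc} gives $w\in C^\alpha(B_{2/3})$, and then Lemma~\ref{lem:Lu_2} applied to the trivial equation $\fls w=0$ on successively smaller balls raises the H\"older exponent in increments of $2s$ (skipping integer values) until arbitrarily high regularity is reached. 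Combining the estimate on $v$ with that on $w$ yields the claimed bound on $u=v+w$ in $B_{1/2}$.

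The main obstacle I anticipate is the borderline case $2s+\theta\in\N$, and in particular $\theta=0$, $s=\tfrac12$ where the target index is exactly $1$: the Riesz potential of a merely bounded function is not Lipschitz but only $C^{1-\delta}$ for every $\delta>0$, with norm blowing up as $\delta\downarrow 0$. This loss is intrinsic to the potential and explains why the statement drops to $C^{1-\delta}$ in that regime. A secondary technical point is that $\tilde f$ disagrees with $f$ outside $B_{3/4}$, so $\fls w = 0$ only holds on $B_{3/4}$ and not globally; this is precisely why the conclusion is formulated on $B_{1/2}$ rather than on $B_{3/4}$ itself, leaving a buffer in which the interior estimate for $w$ can be applied.
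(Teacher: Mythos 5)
The paper does not prove Proposition~\ref{prop:viscosity_fls}: it is quoted as a known result with a pointer to \cite{RS16}. Your decomposition $u=v+w$, with $v$ a Riesz potential of a truncated extension of $f$ and $w$ an $s$-harmonic remainder handled by the Poisson kernel, is precisely the classical argument behind that citation, so in spirit you are reconstructing the standard proof rather than inventing a new one. Two of your steps, however, have genuine gaps.

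First, the passage from ``$\fls v=f$ classically on $B_{3/4}$'' to ``$\fls w=0$ in the viscosity sense'' requires $v$ to be regular enough that $\fls v$ is a well-defined continuous function, i.e.\ $v\in C^{2s+\epsilon}$ for some $\epsilon>0$. For $\theta>0$ your Schauder estimate gives exactly this, and then the subtraction is legitimate (one adds $v$ to any test function touching $w$). But in the case $\theta=0$ the potential $v$ is only $C^{2s}$ (and only $C^{1-\delta}$ when $s=\tfrac12$), which is borderline: the principal value defining $\fls v$ need not converge pointwise for a merely continuous datum, so $w=u-v$ is not obviously a viscosity solution of the homogeneous equation. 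The standard repair is to mollify $f$, apply the $\theta>0$ argument with estimates depending only on $\|f\|_{L^\infty}$ (or on $\|f\|_{C^\theta}$ uniformly in the mollification parameter), and pass to the limit using stability of viscosity solutions; as written, your proof does not cover $\theta=0$, which is the case the paper actually uses in Lemmas~\ref{lem:regularization} and~\ref{lem:regularization2}. Second, your ``alternative'' bootstrap for $w$ is circular: Lemma~\ref{lem:Lu_2} estimates $\L_x u$ in terms of the regularity of $u$, it does not upgrade the regularity of $u$ from the equation; the step ``$w\in C^\alpha$ and $\fls w=0$ imply $w\in C^{\alpha+2s}$'' is exactly (a special case of) the proposition being proved. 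Stick with the Poisson representation, which does give $w\in C^\infty(B_{1/2})$ with $\|w\|_{C^k(B_{1/2})}\le C_k\|w\|_{L^\infty(\R^n)}$ once uniqueness of the Dirichlet problem identifies $w$ with its Poisson integral. A minor further point: the kernel $|x-y|^{2s-n}$ is only the fundamental solution when $n>2s$, so for $n=1$ and $s\ge\tfrac12$ you must replace it by the modified (logarithmic or first-order-corrected) Riesz kernel; the estimates are unaffected.
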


\section{Approximation of equations with regular kernels}
\label{sec:aprox_reg_kernels}

  We start with a first approximation result in the case of regular kernels. We consider operators of the form 
\begin{equation}
\label{eq:Iepsisoftheform0}
\I (u, x) = \inf_{b\in \B}\sup_{a\in \A}\big\{-\L_{ab, x} u(x) + c_{ab}(x)\big\},\qquad \L_{ab, x}\in \LL_s(\lambda, \Lambda; \theta),
\end{equation}
from which we define its regularized version as
\begin{equation}
\label{eq:Iepsisoftheform}
\I_\eps (u, x) = \inf_{b\in \B}\sup_{a\in \A}\left\{-\L_{ab, x}^{(\eps)} u(x) + c_{ab}(x)\right\},\qquad \L_{ab, x}^{(\eps)}\in \LL_s(\lambda, \Lambda; \theta),
\end{equation}
where   the $c_{ab}$ are the same as above. The first regularization or approximation result  is then the following:

\begin{prop}
\label{prop:regularization}
Let $s\in (0, 1)$, and let $\I \in \II^\omega_s(\lambda, \Lambda; \theta)$ with $\theta\in [0, 1)$ be of the form \eqref{eq:Iepsisoftheform0}. Let us assume, moreover, that 
\[
\sup_{(a, b)\in \A\times\B} [c_{ab}]_{C^\theta(\R^n)}<\infty,
\]
where we denote $[\, \cdot\,]_{C^0} = {\rm osc}(\, \cdot\, )$. 

Let $u\in C(B_1)\cap L^\infty(\R^n)$ be any viscosity solution of
\[
\I (u, x) = 0\quad\text{in}\quad B_1. 
\]
Then, there exist  a sequence of functions,
\[\qquad \qquad u^{(\eps)}\in C^{2s+\theta}_{\rm loc}(B_{3/4})\cap {C(B_1)}\cap L^\infty(\R^n)\qquad \textrm{if}\quad 2s+\theta\notin\N,\]
or $u^{(\eps)}\in C_{\rm loc}^{1-\delta}(B_{3/4})\cap {C(B_1)}\cap L^\infty(\R^n)$ for any $\delta > 0$ if $\theta = 0$ and $s = \frac12$; and  a sequence of operators $\I_\eps\in \II^\omega_s(\lambda, \Lambda; \theta)$ of the form \eqref{eq:Iepsisoftheform} and satisfying $[\I_\eps]^y_{\theta}\le C_1 [\I]^y_{\theta}$ if  $\theta > 0$, with $C_1$ depending only on $n$, $s$, $\lambda$, $\Lambda$, and $\theta$,
such that, 
\[
\left\{
\begin{array}{rcll}
\I_\eps(u^{(\eps)}, x) & = & 0 & \quad\text{in}\quad B_{3/4}\\
u^{(\eps)} & = & u & \quad\text{in}\quad \R^n\setminus B_{3/4},
\end{array}
\right.
\]
and
\[
u^{(\eps)}\to u\quad\text{locally uniformly in $B_{3/4}$}.
\]
 Moreover, we have 
\begin{equation}
\label{eq:bound_reg}
\|u^{(\eps)}\|_{L^\infty(B_{3/4})}\le C \left(\| u\|_{L^\infty (\R^n)} + \|\I(0, x)\|_{L^\infty(B_{3/4})}\right)
\end{equation}
for some $C$ depending only on $n$, $s$, $\lambda$, and $\Lambda$. 
\end{prop}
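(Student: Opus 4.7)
I would follow the blueprint of \cite{CS11b} (cf.\ also \cite{Kri13, Ser15}), adapted to the more general setting of non-translation-invariant operators, all $s\in(0,1)$, and arbitrary $\theta\ge 0$.

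\emph{Step 1: Construction of $\I_\eps$.} Fix a constant $c_\circ\in[\lambda,\Lambda]$ and a radial cutoff $\phi_\eps\in C^\infty(\R^n;[0,1])$ with $\phi_\eps\equiv 0$ on $B_\eps$ and $\phi_\eps\equiv 1$ outside $B_{2\eps}$. For each $(a,b)\in\A\times\B$, set
\[
K^{(\eps)}_{ab}(x,y):=\phi_\eps(y)\,K_{ab}(x,y)+\bigl(1-\phi_\eps(y)\bigr)\,\frac{c_\circ}{|y|^{n+2s}},
\]
and define $\L^{(\eps)}_{ab,x}$ by \eqref{eq:Lu1} and $\I_\eps$ by \eqref{eq:Iepsisoftheform} (keeping the same $c_{ab}$). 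Being a pointwise convex combination in $y$ of two admissible kernels, $K^{(\eps)}_{ab}$ inherits \eqref{eq:Kint1}, \eqref{eq:compdef}, and the $x$-modulus \eqref{eq:contwK} (with the same $\omega$). On $\{|y|\le\eps\}$ the kernel equals that of $c_\circ\fls$, which is $y$-smooth, and on $\{|y|\ge 2\eps\}$ it equals $K_{ab}$, so the $y$-regularity gives $[\I_\eps]^y_\theta\le C_1[\I]^y_\theta$; the transition annulus is handled by the smoothness of $\phi_\eps$. Moreover $\I_\eps(0,x)=\I(0,x)$ since the $c_{ab}$ are not modified.

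\emph{Step 2: Solvability and a priori bounds.} I would construct $u^{(\eps)}$ by Perron's method for the Dirichlet problem
\[
\I_\eps(u^{(\eps)},x)=0\text{ in }B_{3/4},\qquad u^{(\eps)}=u\text{ in }\R^n\setminus B_{3/4},
\]
using the comparison principle for operators in $\II^\omega_s(\lambda,\Lambda)$ and the continuity of $u$ (which supplies continuous barriers near $\partial B_{3/4}$). Uniform ellipticity yields $\Mp u^{(\eps)}\ge -\|\I(0,\cdot)\|_{L^\infty(B_{3/4})}$ and $\Mm u^{(\eps)}\le\|\I(0,\cdot)\|_{L^\infty(B_{3/4})}$, so Lemma~\ref{lem:Linftybound_visc} gives the $L^\infty$ bound \eqref{eq:bound_reg}. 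Theorem~\ref{C^alpha-bmc} provides interior $C^\gamma_{\rm loc}(B_{3/4})$-estimates, and Lemma~\ref{lem:bdry_reg_int} provides boundary $C^\delta$-estimates, since Theorem~\ref{C^alpha-bmc} applied to the original equation $\I(u,x)=0$ gives $u\in C^\gamma_{\rm loc}(B_1)$, hence $u\in C^\gamma$ on a collar neighborhood of $\partial B_{3/4}$.

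\emph{Step 3: Interior $C^{2s+\theta}_{\rm loc}$ regularity.} The crucial consequence of the construction is that every $K^{(\eps)}_{ab}$ coincides with $c_\circ|y|^{-n-2s}$ on $B_\eps$, so each $\L^{(\eps)}_{ab,x}$ splits as
\[
\L^{(\eps)}_{ab,x}v(x)=c_\circ\fls v(x)+J^{(\eps)}_{ab,x}v(x),
\]
where $J^{(\eps)}_{ab,x}$ has kernel supported in $\{|y|\ge\eps\}$. The common $c_\circ\fls u^{(\eps)}(x)$ term factors out of the $\inf\sup$, giving the pointwise identity
\[
c_\circ\fls u^{(\eps)}(x)=\inf_{b\in\B}\sup_{a\in\A}\bigl\{c_{ab}(x)-J^{(\eps)}_{ab,x}u^{(\eps)}(x)\bigr\}=:F_\eps(x)\text{ in }B_{3/4}.
\]
Because $J^{(\eps)}_{ab,x}u^{(\eps)}$ is an integral over $\{|y|\ge\eps\}$, its $x$-modulus is controlled by (a) the Hölder modulus of $u^{(\eps)}$ itself (through difference quotients of $u^{(\eps)}(x)-u^{(\eps)}(x+y)$), (b) the $\omega$-modulus \eqref{eq:contwK} of $K^{(\eps)}_{ab}$ in $x$, and (c) the $C^\theta$ uniform bound on $c_{ab}$. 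Starting from $u^{(\eps)}\in C^\gamma_{\rm loc}(B_{3/4})$ and applying Proposition~\ref{prop:viscosity_fls} to the identity above, we promote $u^{(\eps)}$ to a better Hölder class; iterating finitely many times, each step increasing the exponent by a fixed amount until capped by $2s+\theta$, we arrive at the claimed regularity (or at $C^{1-\delta}_{\rm loc}$ in the borderline case $\theta=0$, $s=\tfrac12$). Estimates on $J^{(\eps)}_{ab,x}$ follow the scheme of Lemma~\ref{lem:Lu_2}, restricted to the region $\{|y|\ge\eps\}$.

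\emph{Step 4: Passage to the limit.} The uniform $L^\infty$ bound, interior $C^\gamma$ bound, and boundary $C^\delta$ bound produce precompactness of $\{u^{(\eps)}\}$ in $C(\overline{B_{3/4}})$. Since $K^{(\eps)}_{ab}\to K_{ab}$ pointwise away from $0$ with a uniform dominating kernel, $\L^{(\eps)}_{ab,x}v(x)\to\L_{ab,x}v(x)$ uniformly on compact sets for any smooth $v\in L^\infty(\R^n)$, so $\I_\eps\rightharpoonup\I$ in the sense of Definition~\ref{defi:conv_I}. Standard stability of viscosity solutions under weak convergence of operators (cf.\ \cite[Lemma 4.5]{CS11}) implies that every accumulation point $u^\star$ solves $\I(u^\star,x)=0$ in $B_{3/4}$ with $u^\star=u$ outside; by the comparison principle $u^\star=u$, and hence $u^{(\eps)}\to u$ locally uniformly in $B_{3/4}$.

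The main obstacle is the bootstrap in Step~3: the $x$-regularity of the kernels is only the integral modulus $\omega$ rather than a pointwise Hölder seminorm, so careful bookkeeping is required to transfer the regularity of $u^{(\eps)}$ into genuine $C^\theta_{\rm loc}$ regularity of $F_\eps$ before invoking Proposition~\ref{prop:viscosity_fls}. The $\eps$-dependence of the resulting constants is acceptable: for Step~4 only compactness is needed, not uniform-in-$\eps$ higher regularity.
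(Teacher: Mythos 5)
Your overall architecture matches the paper's: replace the kernel near the origin by that of the fractional Laplacian, solve the Dirichlet problem in $B_{3/4}$ with exterior datum $u$, get uniform $L^\infty$ and $C^\gamma$ bounds, bootstrap interior regularity through the identity $c_\circ\fls u^{(\eps)}=F_\eps$ using Proposition~\ref{prop:viscosity_fls}, and pass to the limit by weak convergence of operators plus stability and uniqueness. However, there is a genuine gap in your Step~1 that your Step~3 cannot repair. Your kernel
\[
K^{(\eps)}_{ab}(x,y)=\phi_\eps(y)K_{ab}(x,y)+\bigl(1-\phi_\eps(y)\bigr)c_\circ|y|^{-n-2s}
\]
keeps the original, merely $\omega$-continuous-in-$x$ kernel on $\{|y|\ge 2\eps\}$. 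When you differentiate $F_\eps(x)=\inf_b\sup_a\{c_{ab}(x)-J^{(\eps)}_{ab,x}u^{(\eps)}(x)\}$ in $x$, the term $u^{(\eps)}(x)\int_{\{|y|\ge\eps\}}\bigl|K_{ab}(x+h,y)-K_{ab}(x,y)\bigr|\,dy$ is controlled only by $C_\eps\,\omega(|h|)$, and for a general modulus $\omega$ (say $\omega(t)=1/|\log t|$) this is not Hölder of any positive order. Hence $F_\eps$ is at best $\omega$-continuous, Proposition~\ref{prop:viscosity_fls} can only be invoked with exponent $0$, and the bootstrap stalls at $C^{2s}$: the claimed $C^{2s+\theta}_{\rm loc}$ regularity for $\theta>0$ is unreachable. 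This is not ``careful bookkeeping'' but an obstruction intrinsic to your construction.

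The paper's construction \eqref{eq:Kepsdef} avoids this precisely by also mollifying the far-field part of the kernel in the $x$-variable, $K_\eps(x,y)=(1-\psi(|y|/\eps))\,(K(\cdot,y)*\varphi_\eps)(x)+\psi(|y|/\eps)|y|^{-n-2s}$, so that for fixed $\eps$ the kernels are uniformly Lipschitz (indeed smooth) in $x$; the corresponding term in the proof of Lemma~\ref{lem:regularization} is then bounded by $C_\eps|h|$, with constants that blow up as $\eps\downarrow 0$ but are harmless since only qualitative regularity for fixed $\eps$ is needed. With that modification your Step~3 goes through as in the paper (the cap at exponent $\theta$ coming from the $y$-regularity term $[\I]^y_\theta$), and the remaining steps of your proposal are essentially the paper's argument. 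Two minor points: the construction of $u^{(\eps)}$ is taken from \cite{Ser15,Mou19} rather than redone via Perron's method, and the stability result used is \cite[Lemma 4.3]{CS11}.
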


Let us start with the construction of $\I_\eps$.  Let $\psi:[0, \infty) \to [0, \infty)$ be a given fixed cut-off function such that 
\[
\left\{\begin{array}{l}
\psi\in C^\infty_c([0, \infty)), \\
\psi = 1 \quad\text{in}\quad [0, 1/2],\\ 
\psi = 0 \quad\text{in}\quad [1, \infty),\\
\text{$\psi$ is monotone nonincreasing}.
\end{array}
\right.
\]
We also fix a mollifier $\varphi$ such that
\begin{equation}
\label{eq:mollifiervarphi}
\text{$\varphi\in C^\infty_c(B_1)
$ is radial, with $\varphi \ge 0$ and $\textstyle{\int_{B_1}}\varphi = 1$,}
\end{equation}
and we consider the rescalings
\begin{equation}
\label{eq:mollifiervarphi2}
\varphi_\eps(x) := \frac{1}{\eps^n}\varphi\left(\frac{x}{\eps}\right)\in C^\infty_c(B_\eps). 
\end{equation}

 Given $\L_x\in \LLL$ with kernel $K$ (which satisfies \eqref{eq:compdef}-\eqref{eq:contwK}) and $\eps > 0$, we define $\L^{(\eps)}_x$ to be the operator that has kernel $K_\eps$ given by 
 \begin{equation}
 \label{eq:Kepsdef}
 K_\eps(x, y) = \big(1-\psi(|y|/\eps) \big) (K(\cdot, y) * \varphi_\eps)(x)+ \psi(|y|/\eps)|y|^{-n-2s}.
 \end{equation}
 Notice that with this definition we still have $\L_x^{(\eps)}\in \LLL$. Moreover, we have:
 
% \begin{figure}
%\centering
%\makebox[\textwidth][c]{\includegraphics[scale = 1]{./Figures/fig.12_03_smaller.pdf}}
%\caption{\label{fig:12} The kernels $K$ and $K_\eps$ given by \eqref{eq:Kepsdef}.}
%\end{figure}
 
 \begin{lem}
 \label{lem:Lepsalpha} Let $s\in (0, 1)$. If $\L_x\in \LL^\omega_s(\lambda, \Lambda; \theta)$ for some $\theta>0$, then $\L_x^{(\eps)}\in \LL^\omega_s(\lambda, \Lambda; \theta)$ as well, with 
\[
 \big[\L_x^{(\eps)}\big]^y_{\theta} \le C    [\L_x]^y_{\theta}  
\]
 for some constant $C$ depending only on $n$, $s$, $\theta$, $\lambda$, and $\Lambda$. 
 \end{lem}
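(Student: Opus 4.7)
The plan is to check, one by one, the four defining conditions of $\LL^\omega_s(\lambda,\Lambda;\theta)$ for $K_\eps$. The first three are direct from~\eqref{eq:Kepsdef}: the symmetry $K_\eps(x,-y)=K_\eps(x,y)$ uses $K(\cdot,-y)=K(\cdot,y)$ together with the radial choice of $\psi$ and of $|y|^{-n-2s}$; the ellipticity bounds~\eqref{eq:compdef} hold because $(K(\cdot,y)*\varphi_\eps)(x)$ is a mollified average of values in $[\lambda|y|^{-n-2s},\Lambda|y|^{-n-2s}]$, and $K_\eps$ is then a pointwise-in-$y$ convex combination of $K*\varphi_\eps$ and $|y|^{-n-2s}$; and the $\omega$-modulus~\eqref{eq:contwK} in $x$ is preserved by the $x$-convolution via Fubini (the $\psi(|y|/\eps)|y|^{-n-2s}$ piece is $x$-independent).

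The heart of the lemma is the quantitative bound $[K_\eps]^y_\theta \le C[K]^y_\theta$. My key move is the convexity representation
\[
K_\eps(x,y) = \int_{\R^n}\tilde K_v(x,y)\,\varphi_\eps(v)\,dv,\qquad \tilde K_v(x,y):=(1-\chi(y))\,K(x-v,y)+\chi(y)\,|y|^{-n-2s},
\]
with $\chi(y)=\psi(|y|/\eps)$. Triangle inequality and Fubini applied inside the defining integral \eqref{Cmu-assumption} for $[K_\eps]^y_\theta$ reduce matters to the uniform-in-$v$ bound $[\tilde K_v]^y_\theta \le C[K]^y_\theta$. Setting $m=\lceil\theta-1\rceil$, the Leibniz rule expands $D^m_y\tilde K_v$ into a \emph{principal piece} $(1-\chi(y))D^m_y K(x-v,y)$, which already satisfies the target estimate thanks to $|1-\chi|\le 1$ and the definition of $[K]^y_\theta$ applied to the translate $K(x-v,\cdot)$, together with finitely many \emph{cross pieces} in which at least one $y$-derivative lands on $\chi$ or on $|y|^{-n-2s}$.

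I would control the cross pieces by a two-scale split. In the large-scale regime $r>2\eps$, all evaluation points $z-y,z'-y$ of $\chi$ lie outside $B_\eps$ (because $|z-y|,|z'-y|\ge r/2>\eps$ for $z,z'\in B_{r/2}$ and $y\in B_{2r}\setminus B_r$), so $\chi$ and all its derivatives vanish there and every cross piece is identically zero. In the small-scale regime $r\le 2\eps$ I would use the crude pointwise bounds $|D^k\chi|\lesssim\eps^{-k}$, $|K(x-v,w)|\le\Lambda|w|^{-n-2s}$, $|D^k_w|w|^{-n-2s}|\lesssim|w|^{-n-2s-k}$ on $|w|\ge r/2$, and $|B_{2r}\setminus B_r|\lesssim r^n$. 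The resulting cross-piece bound has the form $C\Lambda|z-z'|$ divided by a product of powers of $r$ and $\eps$ totalling $2s+m+1$, and the algebraic identity $|z-z'|^{1-(\theta-m)}\eps^{-(1-(\theta-m))}\le (r/\eps)^{1-(\theta-m)}\le C$ (which uses $|z-z'|\le r\le 2\eps$ and $\theta-m\in(0,1]$) upgrades a factor of $|z-z'|$ into a factor of $|z-z'|^{\theta-m}$ at the cost of a dimensional constant, yielding the required $C\Lambda|z-z'|^{\theta-m}/r^{2s+\theta}$. The factor $\Lambda$ is then absorbed into the final constant, which the lemma allows to depend on $\lambda$ and $\Lambda$.

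The main obstacle is precisely this small-scale regime $r\le 2\eps$: there the $x$-mollification has not yet smoothed $K$ in $y$ and the cutoff $\chi$ is genuinely active, so one cannot simply quote the hypothesis $[K]^y_\theta<\infty$. The whole argument hinges on trading the $1/\eps$-losses coming from $y$-derivatives of $\chi$ against the smallness $r\le 2\eps$ via the algebraic identity above; all the other conditions ($y$-symmetry, ellipticity, $x$-continuity) are stable under mollification in $x$ by design.
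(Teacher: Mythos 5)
Your strategy is essentially the paper's: split $K_\eps$ into the $x$-mollified copy of $K$, the piece $\psi(|y|/\eps)|y|^{-n-2s}$, and the cross terms carrying the cutoff; control the first by $[K]^y_\theta$ (your representation of the convolution as an average of translates is just an unpacked Minkowski inequality); and observe that every term involving $\psi(|\cdot|/\eps)$ is only active at scales $r\lesssim\eps$, where the loss $\eps^{-1}$ per derivative of the cutoff is traded against $|z-z'|\le r\lesssim\eps$. Your two-scale split and the identity $|z-z'|^{1-(\theta-m)}\eps^{-(1-(\theta-m))}\le C$ is exactly the content of the paper's observation that $[\psi]_{\theta;r/\eps}=0$ unless $r\sim\eps$. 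For $\theta\in(0,1)$ (the case actually needed in Proposition~\ref{prop:regularization}) the estimates you outline do go through, using only $|K|\le\Lambda|y|^{-n-2s}$ from \eqref{eq:compdef}.

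There is, however, one step that does not close as written: the final absorption. Your argument produces $[K_\eps]^y_\theta\le C\big([K]^y_\theta+\Lambda+1\big)$, since the cross pieces and the $\psi(|y|/\eps)|y|^{-n-2s}$ piece contribute an \emph{additive} constant of size $\Lambda+1$, not a multiple of $[K]^y_\theta$. You cannot simply ``absorb $\Lambda$ into the final constant'', because the constant in the statement multiplies $[\L_x]^y_\theta$, which a priori could be arbitrarily small, and then $C[K]^y_\theta$ would not dominate an additive $\Lambda$. The missing ingredient --- which the paper supplies in one sentence --- is that $[\L_x]^y_\theta\ge c(n,s,\theta,\lambda,\Lambda)>0$ uniformly over $\LL^\omega_s(\lambda,\Lambda;\theta)$: a kernel squeezed between $\lambda|y|^{-n-2s}$ and $\Lambda|y|^{-n-2s}$ must exhibit a definite amount of $y$-variation at some scale, so the additive constant can then be converted into a multiplicative one. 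A secondary caveat, which your write-up shares with the paper's own terse proof: when $\theta>1$ (so $m\ge1$), both the Leibniz cross pieces and the ``$\chi$ varies'' term in your principal piece involve derivatives $D^j_yK$ with $1\le j\le m$, for which the hypotheses provide no pointwise bound (only the integrated H\"older control \eqref{Cmu-assumption} of $D^m_yK$ and the pointwise bound on $K$ itself); in the surviving regime $r\lesssim\eps$ one would need an additional interpolation step to control these. For $\theta\in(0,1)$ this issue is vacuous and your proof is complete once the absorption point above is repaired.
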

 \begin{proof}
 Let us define, for a kernel $J(x, y)$ with $\lceil \theta-1 \rceil = m$, 
 \[
 [J]_{\theta; r} := \sup_{x\in \R^n}\sup_{z, z'\in B_{r/2}} \int_{B_{2r}\setminus B_r} \frac{\left|D^m J(x, z-y) - D^m J(x, z'-y)\right|}{|z-z'|^\theta}\, dy,
 \]
(cf.   \eqref{Cmu-assumption}).

% and if $\theta = m\in \N$, 
% \[
% [J]_{\theta; r} := \sup_{x\in \R^n}\sup_{z \in B_{r/2}} \int_{B_{2r}\setminus B_r} \frac{\left|D^m J(x, z-y)\right|}{|z-z'|^m}\, dy,
% \]
%(cf. \eqref{Cmu-assumption__2}), so that 
% \[
% [J]^y_{\theta} = \sup_{r > 0} \, r^{2s+\theta}[J]_{\theta; r}.
% \]
 By the triangle inequality, we have 
 \[
 \begin{split}
 [K_\eps]_{\theta; r} & \le  [K *_x \varphi_\eps]_{\theta;r} + C (\Lambda+1)r^{-n-2s}  \big[\psi(|y|/\eps)\big]_{\theta;r} + \big[|y|^{-n-2s}\big]_{\theta;r}\\
& \le {C}\left([K]^y_{\theta}+  r^{-n+\theta}\eps^{n-\theta}[\psi]_{\theta;r/\eps} + \big[|y|^{-n-2s}\big]_{\theta;1}\right){r^{-2s-\theta}},
 \end{split}
 \]
 which, since $\psi$ is fixed and $[\psi]_{\theta;r/\eps} = 0$ for $r>100\eps$ or $r<\eps/100$, directly implies 
 \[
  [K_\eps]_{\theta; r} \le C([K]^y_{\theta}+1)r^{-2s-\theta},
 \]
and hence
 \[
 [K_\eps]^y_\theta \le C([K]^y_\theta+1).
 \]
 Since $[\L_x]^y_{\theta}\ge c > 0$ for all $\L_x \in \LL^\omega_s(\lambda, \Lambda; \theta)$, the result follows. 
 \end{proof}

 \begin{rem} \label{rem:reg_in_x} Notice that, in fact, the new operators $\L_x^{(\eps)}$ are regularizing in $x$, so  $\L_x^{(\eps)}\in \LL^{\omega}_s(\lambda, \Lambda)\cap \LL^{\infty}_s(\lambda, \Lambda)$ with bounds independent of $(a, b)\in \A\times \B$.  
 \end{rem}
 \begin{rem}
 \label{rem:pointwise2}
 The same proof would also yield that the operators   preserve pointwise norms, like the ones in Remark~\ref{rem:pointwise}.
 \end{rem}

Let now $\I \in \II^\omega_s(\lambda, \Lambda; \theta)$ for some $\theta>0$, i.e., of the form \eqref{eq:Iepsisoftheform0}. We define $\I_\eps$ as \eqref{eq:Iepsisoftheform} with $\L_{ab, x}^{(\eps)}$ given by \eqref{eq:Kepsdef}. By Lemma~\ref{lem:Lepsalpha} we immediately have that 
\[
\I_\eps\in \II^\omega_s(\lambda, \Lambda; \theta)
\]
 as well, with $[\I_\eps]^y_{\theta} \le C [\I]^y_{\theta}$  if $\theta >0$. Furthermore, $\I_\eps$ weakly converges to $\I$ as $\eps\downarrow 0$:

 \begin{lem} 
 \label{lem:weakconv}
Let $s\in (0, 1)$, and let $\I, \I_\eps\in \II^\omega_s(\lambda, \Lambda)$ be as above. Then 
 \[
 \I_\eps\rightharpoonup \I\quad\text{in}\quad \R^n,\quad \text{as}\quad \eps\downarrow 0,
 \]
 in the sense of Definition~\ref{defi:conv_I}.
 \end{lem}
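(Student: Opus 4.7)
The plan is to exploit the inf-sup structure of both $\I_\eps$ and $\I$ to reduce the claim to a single uniform estimate on the underlying linear operators. Using the elementary bound $|\inf_b\sup_a f_{ab} - \inf_b\sup_a g_{ab}| \le \sup_{a,b}|f_{ab} - g_{ab}|$, one immediately gets
$$|\I_\eps(v,x) - \I(v,x)| \le \sup_{(a,b)\in \A\times\B}\bigl|\L_{ab,x}^{(\eps)} v(x) - \L_{ab,x} v(x)\bigr|,$$
so it suffices to prove that for an arbitrary $\L_x \in \LLL$ with kernel $K$, the difference $|\L_x^{(\eps)} v(x) - \L_x v(x)|$ tends to $0$ as $\eps\downarrow 0$, uniformly for $x\in B_{r/2}(x_\circ)$ and with a bound depending only on $v$, $r$, and the universal parameters $n,s,\lambda,\Lambda,\omega$.

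To establish this, I would use the symmetry of both kernels and write
$$\L_x^{(\eps)} v(x) - \L_x v(x) = \tfrac12\int_{\R^n}\delta v(x,y)\bigl(K_\eps(x,y) - K(x,y)\bigr)\,dy, \qquad \delta v(x,y) := 2v(x) - v(x+y) - v(x-y),$$
and split the integral at the scale $|y|=\eps$, according to the structure of $K_\eps$ in \eqref{eq:Kepsdef}. On the small region $|y|\le\eps$, both kernels are pointwise dominated by $C|y|^{-n-2s}$, while $C^2$ regularity of $v$ in $B_r(x_\circ)$ gives $|\delta v(x,y)|\le\|v\|_{C^2(B_r(x_\circ))}|y|^2$ (since $x\pm y\in B_r(x_\circ)$ for $\eps<r/2$); this produces a contribution of order $\eps^{2-2s}$. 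On the large region $|y|>\eps$ we have $\psi(|y|/\eps)=0$ and hence $K_\eps(x,y) - K(x,y) = \bigl(K(\cdot,y)*\varphi_\eps\bigr)(x) - K(x,y)$.

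The crux of the estimate is a dyadic bound: for each $\rho\ge\eps$, Minkowski's inequality combined with the continuity hypothesis \eqref{eq:contwK} yields
$$\int_{B_{2\rho}\setminus B_\rho}\!|K_\eps(x,y) - K(x,y)|\,dy \le \int_{B_\eps}\varphi_\eps(z)\frac{\omega(|z|)}{\rho^{2s}}\,dz \le \frac{\omega(\eps)}{\rho^{2s}}.$$
Combining this with $|\delta v|\le\|v\|_{C^2(B_r)}|y|^2$ on annuli with $\rho\le r/4$ and with $|\delta v|\le 4\|v\|_{L^\infty}$ on annuli with $\rho\ge r/4$, and summing over dyadic scales $\rho=2^j\eps$, gives a total contribution bounded by $C\bigl(\|v\|_{C^2}r^{2-2s} + \|v\|_{L^\infty}r^{-2s}\bigr)\omega(\eps)$; the two geometric series converge precisely because $2-2s>0$ and $2s>0$. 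Putting the small and large scales together,
$$\bigl|\L_x^{(\eps)} v(x) - \L_x v(x)\bigr|\le C\bigl(\|v\|_{C^2}\eps^{2-2s} + (\|v\|_{C^2}r^{2-2s} + \|v\|_{L^\infty}r^{-2s})\omega(\eps)\bigr),$$
uniformly in $x\in B_{r/2}(x_\circ)$ and in $\L_x\in\LLL$, and both terms vanish as $\eps\downarrow 0$, which is precisely the conclusion. I do not expect any serious obstacle; the only mild subtlety is the transition region $\eps/2\le|y|\le\eps$, where $K_\eps$ is a convex combination of the two ingredients, but there $K_\eps$ is still pointwise bounded by $C|y|^{-n-2s}$ and is absorbed into the small-scale estimate.
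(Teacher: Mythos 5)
Your proof is correct and follows essentially the same route as the paper: the same reduction via the $\inf\sup$ structure to a uniform estimate on a single linear operator, the same splitting of $K_\eps-K$ into the cut-off part near the origin (controlled by $C^2$ regularity of $v$ and the pointwise kernel bounds, giving $\eps^{2-2s}$) and the mollification part (controlled by \eqref{eq:contwK}, giving $\omega(\eps)$). The only difference is organizational: you integrate in $z$ first (Minkowski) and make the dyadic summation in $y$ explicit, whereas the paper integrates in $y$ first for fixed $z$ and leaves that dyadic step implicit.
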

 \begin{proof}
 Let $x_\circ\in \R^n$, and let  $v\in L^\infty (\R^n)$    such that it is $C^2$ in $B_r(x_\circ)$ for some $r > 0$. Let us compute, for any $x\in B_{r/2}(x_\circ)$ and $\L_x\in \II^\omega_s(\lambda, \Lambda)$ with kernel $K$,
 \[
 \L^{(\eps)} v(x) - \L v(x) =\frac12\int_{\R^n} \big(2v(x)-v(x+y)-v(x-y)\big)\big(K_\eps(x, y)-K(x, y)\big)\,dy.
 \]
 Since 
 \[
 K_\eps(x, y)-K(x, y) = \psi(|y|/\eps)\left(|y|^{-n-2s}-K(x, y)\right) + (K(\cdot, y) * \varphi_\eps)(x) - K(x, y)\]
  we can bound the right-hand side by 
 \[
\big| \L^{(\eps)} v(x) - \L v(x)\big|\le I  + II
 \]
 where, directly using that $|K(x, y)|\le \Lambda|y|^{-n-2s}$, we have
 \[
 I := C \int_{B_\eps} \big|2v(x)-v(x+y)-v(x-y)\big||y|^{-n-2s}\,dy,
 \]
 and 
 \[
 II := \int_{B_\eps} \int_{\R^n} \big|2v(x)-v(x+y)-v(x-y)\big|\big|K(x-z, y)-K(x, y)\big|\,dy \,\varphi_\eps(z)\, dz.
 \]
 Now, since $v$ is $C^2$ in $B_r(x_\circ)$ and taking $\eps < r/4$ we can bound $I$ by 
  \[
  \begin{split}
I& \le C \|v\|_{C^2(B_{3r/4}(x_\circ))}\int_{B_\eps} |y|^{-n-2s+2}\,dy \le C \|v\|_{C^2(B_{3r/4}(x_\circ))}\eps^{2-2s}.
\end{split}
 \]
 On the other hand, thanks to \eqref{eq:contwK}   we also have
 \[
 \begin{split}
 \int_{\R^n} \big|2v(x)-v(x+y)-v(x-y)&\big|\big|K(x-z, y)-K(x, y)\big|\,dy\\
 & \le C \left(\|v\|_{C^2(B_{3r/4}(x_\circ))} + \|v\|_{L^\infty(\R^n)}\right) \omega(|z|), 
 \end{split}
 \]
 and therefore
 \[
 II \le C \left(\|v\|_{C^2(B_{3r/4}(x_\circ))} + \|v\|_{L^\infty (\R^n)}\right) \omega(\eps). 
 \]

 Hence, we can bound 
\[
\begin{split}
\I_\eps (v, x) & = \inf_{b\in \B}\sup_{a\in \A}\left\{-\L_{ab, x}^{(\eps)} u(x) + c_{ab}(x)\right\}\\
& \le  \I(v, x) + C \left(\|v\|_{C^2(B_{3r/4}(x_\circ))} + \|v\|_{L^\infty (\R^n)}\right)\left(\eps^{2-2s}+ \omega(\eps)\right)
\end{split}
\] 
On the other hand, we also get similarly, 
\[
\I_\eps (v, x) \ge \I(v, x) - C \left(\|v\|_{C^2(B_{3r/4}(x_\circ))} + \|v\|_{L^\infty (\R^n)}\right)\left(\eps^{2-2s}+ \omega(\eps)\right).
\]
In all, we have proved that 
\[
\|\I_\eps(v, \cdot) - \I(v, \cdot)\|_{L^\infty(B_{r/2}(x_\circ))}\downarrow 0
\]
as $\eps\downarrow 0$, that is, $\I_\eps\rightharpoonup\I$.  
\end{proof}

We want to use the previous operators $\I_\eps$ to construct a series of regular solutions approximating a given solution. That is, let $\I \in \II^\omega_s(\lambda, \Lambda; \theta)$ for some $\theta\in [0, 1)$, and let $u\in C(B_1)\cap L^\infty(\R^n)$ be such that 
\begin{equation}
\label{eq:estrelletax2}
\I (u, x) = 0\quad\text{in}\quad B_1. 
\end{equation}
We then define $u^{(\eps)}$ to be the unique solution (see, for example, \cite{Ser15, Mou19})
\begin{equation}
\label{eq:uepsqualdef}
\left\{
\begin{array}{rcll}
\I_\eps(u^{(\eps)}, x) & = & 0 & \quad \text{in}\quad B_{3/4}\\
u^{(\eps)} & = & u & \quad \text{in}\quad \R^n\setminus B_{3/4}. 
\end{array}
\right.
\end{equation}

\begin{lem}
\label{lem:ueps1}
Let $s\in (0, 1)$ and $\I \in \III$. 
Let $u\in C(B_1)\cap L^\infty(\R^n)$ be any viscosity solution of \eqref{eq:estrelletax2}, and let $u^{(\eps)}\in C(B_{3/4})\cap L^\infty(\R^n)$ be the unique solution of \eqref{eq:uepsqualdef}.  Let  $\gamma > 0$ be given by Theorem~\ref{C^alpha-bmc}. Then 
\[
\| u^{(\eps)}\|_{L^\infty(B_{3/4})}+\| u^{(\eps)}\|_{C^\gamma(B_{1/2})} \le C \left(\| u\|_{L^\infty(\R^n)} + \|\I(0, x)\|_{L^\infty(B_{3/4})}\right),
\]
 for some $C$ depending only on $n$, $s$, $\lambda$, and $\Lambda$.
\end{lem}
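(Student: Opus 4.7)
The plan is to derive both bounds as direct consequences of the uniform ellipticity of $\I_\eps$ with respect to the class $\LLL$, together with the two preliminary results stated just before (Lemma~\ref{lem:Linftybound_visc} and Theorem~\ref{C^alpha-bmc}). The key observation, which allows us to replace all quantities involving $\I_\eps$ by the corresponding ones involving $\I$, is that $\L_{ab,x}^{(\eps)} 0 = \L_{ab,x} 0 = 0$ pointwise, so that
\[
\I_\eps(0, x) = \inf_{b\in\B}\sup_{a\in\A} c_{ab}(x) = \I(0, x) \quad \textrm{in } \R^n.
\]

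Using that $\I_\eps \in \II^\omega_s(\lambda, \Lambda)$ is uniformly elliptic with respect to $\LL^\omega_s(\lambda, \Lambda)$, we have
\[
\Mm u^{(\eps)}(x) \le \I_\eps(u^{(\eps)}, x) - \I_\eps(0, x) \le \Mp u^{(\eps)}(x)
\]
in the viscosity sense in $B_{3/4}$. Combined with $\I_\eps(u^{(\eps)},x) = 0$ and the identification $\I_\eps(0,\cdot) = \I(0,\cdot)$, this yields
\[
\Mp u^{(\eps)} \ge -\|\I(0,\cdot)\|_{L^\infty(B_{3/4})}, \qquad \Mm u^{(\eps)} \le \|\I(0,\cdot)\|_{L^\infty(B_{3/4})}
\]
in $B_{3/4}$. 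Now I apply Lemma~\ref{lem:Linftybound_visc} (rescaled to the ball $B_{3/4}$), using that $u^{(\eps)} \equiv u$ in $\R^n\setminus B_{3/4}$, to get
\[
\|u^{(\eps)}\|_{L^\infty(B_{3/4})} \le \|u\|_{L^\infty(\R^n)} + C\|\I(0,\cdot)\|_{L^\infty(B_{3/4})},
\]
which together with the exterior condition gives the analogous bound for $\|u^{(\eps)}\|_{L^\infty(\R^n)}$.

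For the H\"older seminorm, I apply Theorem~\ref{C^alpha-bmc} (again after a trivial rescaling so that the interior ball $B_{1/2}$ sits well inside $B_{3/4}$) to the extremal inequalities above. This produces
\[
\|u^{(\eps)}\|_{C^\gamma(B_{1/2})} \le C\left(\|u^{(\eps)}\|_{L^\infty(\R^n)} + \|\I(0,\cdot)\|_{L^\infty(B_{3/4})}\right),
\]
with $\gamma$ and $C$ depending only on $n$, $s$, $\lambda$, $\Lambda$. Plugging in the $L^\infty$ estimate from the previous step finishes the proof. There is no real obstacle here; the only thing to be careful with is that the extremal bounds do not depend on $\eps$ (which is ensured precisely because $\I_\eps \in \II^\omega_s(\lambda, \Lambda)$ with the same $\lambda, \Lambda$ as $\I$, a point already established after Lemma~\ref{lem:Lepsalpha}) and that the bound on the right-hand side involves $\I(0,\cdot)$ rather than $\I_\eps(0,\cdot)$, which is immediate from the identification above.
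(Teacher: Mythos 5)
Your proof is correct and follows essentially the same route as the paper: the identification $\I_\eps(0,\cdot)=\I(0,\cdot)$, the extremal inequalities from uniform ellipticity, then Lemma~\ref{lem:Linftybound_visc} for the $L^\infty$ bound and Theorem~\ref{C^alpha-bmc} (with the implicit rescaling/covering to pass from $B_{3/4}$ to $B_{1/2}$) for the $C^\gamma$ seminorm. No gaps; you have simply spelled out the steps the paper leaves implicit.
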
 
\begin{proof}
 Observe that,  
\[
\Mp v \ge \I_\eps(v, x) - \I_\eps(0, x) \ge \Mm v ,
\]
and since $\I_\eps(0, x) = \I(0, x)$, the bound on $\|u^{(\eps)}\|_{L^\infty(B_{3/4})}$ directly follows from the comparison principle in Lemma~\ref{lem:Linftybound_visc}, while the bound on $[u^{(\eps)}]_{C^\gamma(B_{1/2})}$ is a consequence of  Theorem~\ref{C^alpha-bmc} and the bound on $\|u^{(\eps)}\|_{L^\infty(B_{3/4})}$.
\end{proof}

We now want to show that the solutions $u^{(\eps)}$ are qualitatively regular (that is, strong solutions) in the interior of $B_{3/4}$. In order to do it, we use the structure of the operator $\I_\eps$, which  behaves like a fractional Laplacian. Thus, we need the interior estimates for viscosity solutions of equations with the fractional Laplacian, Proposition~\ref{prop:viscosity_fls}. The following is the qualitative result on the regularity of $u^{(\eps)}$:

\begin{lem}
\label{lem:regularization}
Let $s\in (0, 1)$. Let $u^{(\eps)}$ be defined as above, \eqref{eq:uepsqualdef}, for a fixed $\I \in \II^\omega_s(\lambda, \Lambda; \theta)$ with $\theta\in [0, 1)$ of the form \eqref{eq:Iepsisoftheform0}. Let us assume, moreover, that 
\[
\sup_{(a, b)\in \A\times\B} [c_{ab}]_{C^\theta(\R^n)}\le C_\circ <\infty,
\]
where we denote $[\, \cdot\,]_{C^0} = {\rm osc}(\, \cdot\, )$.  Then, if $2s+\theta\notin\N$, $u^{(\eps)}\in C^{2s+\theta}_{\rm loc}(B_{3/4})$. If $\theta = 0$ and $s  = \frac12$, we have $u^{(\eps)}\in C_{\rm loc}^{1-\delta}(B_{3/4})$ for any $\delta > 0$.
\end{lem}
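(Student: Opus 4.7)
\emph{The plan.} Since $\psi\equiv 1$ on $[0,1/2]$, the kernel \eqref{eq:Kepsdef} splits as $K_\eps(x,y) = |y|^{-n-2s} + R_\eps(x,y)$, where
\[
R_\eps(x,y) = (1-\psi(|y|/\eps))\bigl[(K(\cdot,y) * \varphi_\eps)(x) - |y|^{-n-2s}\bigr]
\]
vanishes on $\{|y|<\eps/2\}$ and obeys $|R_\eps(x,y)|\le C|y|^{-n-2s}$. Therefore $\L^{(\eps)}_{ab,x} u = \fls u + \L^{\rm reg}_{ab,x}u$, where $\L^{\rm reg}_{ab,x}$ is a zeroth-order operator (well defined on $L^\infty(\R^n)$); and, since the $\fls$ piece is common to all $(a,b)$, it factors out of the $\inf\sup$. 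For any test function $\tilde\varphi$ touching $u^{(\eps)}$ at $x_0$ with the modification supported in $B_{\eps/2}(x_0)$, the kernel of $\L^{\rm reg}_{ab,x_0}$ vanishes on the modification region, so $\L^{\rm reg}_{ab,x_0}\tilde\varphi(x_0)=\L^{\rm reg}_{ab,x_0}u^{(\eps)}(x_0)$. Thus the viscosity equation in \eqref{eq:uepsqualdef} is equivalent to
\[
\fls u^{(\eps)} = g \quad\text{in}\quad B_{3/4},\qquad g(x):=\inf_{b\in\B}\sup_{a\in\A}\bigl\{-\L^{\rm reg}_{ab,x}u^{(\eps)}(x)+c_{ab}(x)\bigr\}.
\]

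With this reformulation I would run a bootstrap, starting from $u^{(\eps)}\in C^{\gamma}_{\rm loc}(B_{3/4})$ given by Lemma~\ref{lem:ueps1}. The key claim is that, for any $\alpha>0$ with $\alpha\notin\N$ and any ball $B_r\subset B_{3/4}$, if $u^{(\eps)}\in C^{\alpha}(B_r)\cap L^\infty(\R^n)$ then $g\in C^{\min(\alpha,\theta)}_{\rm loc}(B_{r'})$ for every $r'<r$, with bounds uniform in $(a,b)$. For this I would use
\[
\L^{\rm reg}_{ab,x}u^{(\eps)}(x) = u^{(\eps)}(x)A^{ab}_\eps(x) - \int u^{(\eps)}(z)\,R^{ab}_\eps(x,z-x)\,dz,\qquad A^{ab}_\eps(x):=\int R^{ab}_\eps(x,y)\,dy.
\]
Mollification in $x$ produces uniform-in-$(a,b)$ $C^\infty$ bounds on $A_\eps^{ab}$ and on $R_\eps^{ab}(\cdot,y)$ (with constants depending on $\eps$). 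A finite-difference estimate of the integral term then follows from a standard splitting: on an annulus near $x$ one uses the $C^\alpha$ bound of $u^{(\eps)}$ together with the integrable tail of $R_\eps^{ab}$, while on the far region one combines the $L^\infty$ bound on $u^{(\eps)}$ with the Lipschitz-in-$x$ bound on $R_\eps^{ab}$. Since the $\inf\sup$ of a uniformly H\"older family is H\"older, and $c_{ab}\in C^\theta$ uniformly, one concludes $g\in C^{\min(\alpha,\theta)}_{\rm loc}(B_{r'})$. Applying Proposition~\ref{prop:viscosity_fls} (after rescaling a small interior ball to unit size) then upgrades $u^{(\eps)}$ to $C^{2s+\min(\alpha,\theta)}_{\rm loc}$, perturbing the exponent slightly whenever $2s+\alpha$ would be an integer. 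Iterating, the regularity improves by $2s$ at each step and reaches $C^{2s+\theta}_{\rm loc}(B_{3/4})$ in finitely many steps; the borderline case $\theta=0$, $s=\tfrac12$ is handled directly by the last clause of Proposition~\ref{prop:viscosity_fls}, giving $u^{(\eps)}\in C^{1-\delta}_{\rm loc}(B_{3/4})$ for every $\delta>0$.

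The main obstacle will be the uniform-in-$(a,b)$ H\"older regularity of $\L^{\rm reg}_{ab,x}u^{(\eps)}$. The subtle point is that $u^{(\eps)}$ is only controlled in $L^\infty(\R^n)$ globally: in the far region of the integral the desired $|h|^{\min(\alpha,1)}$ decay must come from the $C^1_x$-bound on $R_\eps^{ab}$ (which is finite but $\eps$-dependent), while in the near region it comes from $u^{(\eps)}\in C^\alpha_{\rm loc}$; to close the argument one also needs $u^{(\eps)}$ to be H\"older in a small open neighborhood of $\overline{B_{3/4}}$, obtained by combining the interior H\"older estimate (Theorem~\ref{C^alpha-bmc}) for $u$ in $B_1$ (so that $u^{(\eps)}=u$ is $C^\gamma$ just outside $B_{3/4}$) with Lemma~\ref{lem:bdry_reg_int} up to $\partial B_{3/4}$. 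A minor nuisance is the bookkeeping of non-integer exponents in the iteration, resolved by perturbing $\alpha$ slightly whenever $2s+\alpha\in\N$.
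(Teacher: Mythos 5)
Your overall strategy is the same as the paper's: peel off the fractional Laplacian from $\I_\eps$, observe that the remainder $\L^{\rm reg}_{ab,x}$ has an $L^1$ kernel vanishing on $B_{\eps/2}$ (so the equation becomes $\fls u^{(\eps)}=g$ with $g$ an $\inf\sup$ of zeroth-order quantities), and bootstrap with Proposition~\ref{prop:viscosity_fls}. Your justification that the viscosity equation passes to this reformulation (test-function modifications supported in $B_{\eps/2}(x_0)$ are invisible to $\L^{\rm reg}_{ab,x_0}$ at $x_0$) is correct and in fact slightly more explicit than the paper's.

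There is, however, a genuine gap in the key estimate, namely the uniform-in-$(a,b)$ H\"older continuity of $x\mapsto \int u^{(\eps)}(z)\,R^{ab}_\eps(x,z-x)\,dz$. The finite difference of this quantity between $x$ and $x+h$ compares the kernel at $(x+h,\,z-x-h)$ with the kernel at $(x,\,z-x)$: \emph{both} slots move. Your far-region argument invokes only the Lipschitz-in-$x$ bound on $R^{ab}_\eps$ (produced by the mollification in $x$), which controls the first slot. The shift in the second slot, i.e. $\int |u^{(\eps)}(z)|\,\big|R^{ab}_\eps(x,z-x)-R^{ab}_\eps(x,z-x-h)\big|\,dz$, cannot be absorbed that way, nor by the local $C^\alpha$ regularity of $u^{(\eps)}$ (unavailable far from $B_{3/4}$, where $u^{(\eps)}=u$ is merely $L^\infty$): mollification in $x$ gives no regularity in $y$. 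This term is exactly where the hypothesis $\I\in\II^\omega_s(\lambda,\Lambda;\theta)$, i.e. $[\I]^y_\theta<\infty$ (condition \eqref{Cmu-assumption}), must enter --- it is the paper's term $III$, bounded by $C_\rho|h|^\theta$ --- and it is the second reason (besides $[c_{ab}]_{C^\theta}\le C_\circ$) why the gain in the bootstrap is capped at $\theta$. Since your proposal never invokes the $y$-regularity of the kernels, as written it only yields boundedness of $g$; that suffices for $\theta=0$ (including the case $s=\tfrac12$), but not for the main assertion $u^{(\eps)}\in C^{2s+\theta}_{\rm loc}$ when $\theta>0$. (A minor point: the constant-coefficient piece to split off is $c_{n,s}^{-1}\fls$ rather than $\fls$, which only changes $g$ by a fixed normalization.)
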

\begin{proof}
For the sake of readability, let us denote $v = u^{(\eps)}$. Notice that, by Lemma~\ref{lem:ueps1} and a covering argument (or directly by Theorem~\ref{C^alpha-bmc}), we already know that $v$ is $C^\gamma$ inside~$B_{3/4}$. 

 We express now the operator $\I_\eps$ as follows:
\begin{equation}
\label{eq:Iepssplit}
\begin{split}
\I_\eps(v, x) & = -c^{-1}_{n,s}\fls v(x) +\inf_{b\in \B}\sup_{a\in \A}\left\{\tilde \L^{(\eps)}_{ab, x} v (x) + c_{ab}(x)\right\}\\
& =  -c^{-1}_{n,s}\fls v(x) + f_\eps(x),
\end{split}
\end{equation}
where we have denoted,
\[
\begin{split}
\tilde \L^{(\eps)}_{ab, x} v (x) & = \left(c^{-1}_{n,s}\fls-\L_{ab, x}^{(\eps)}\right) v(x)\\
&  = \frac12 \int_{\R^n}\big(2v(x) - v(x+y)-v(x-y)\big) \tilde K_\eps(x, y)\, dy\\
& = \int_{B_{\eps/2}^c}\big(v(x) - v(x+y)\big) \tilde K_\eps(x, y)\, dy,
\end{split}
\]
with
\[
\tilde K_\eps(x, y) = \big(1-\psi(|y|/\eps)\big) \left(|y|^{-n-2s}-(K_{ab}(\cdot, y)*\varphi_\eps)(x)\right)\in L^1(\R^n),
\]
 where $c_{n, s}$ is the constant of the fractional Laplacian, $\fls$,  and $K_{ab}(x, y)$ is the kernel of the operator $\L_{ab,x}$. In particular, 
\begin{equation}
\label{eq:fromexp}
\tilde \L^{(\eps)}_{ab, x} v (x) = v(x)\int_{B_{\eps/2}^c} \tilde K_\eps(x, y)\, dy - \int_{B_{\eps/2}^c(x)} v(z) \tilde K_\eps(x, z-x)\, dz.
\end{equation}
Let now $x\in B_{3/4}$ fixed, and let 
\begin{equation}
\label{eq:fromexp2}
\rho = \min\left\{\frac{\eps}{4}, \frac12 \dist(x, \partial B_{3/4})\right\} = \min\left\{\frac{\eps}{4},\frac12 \left(\frac34 - |x|\right)\right\}>0.
\end{equation}

Let us bound, for $h\in B_\rho$,
\[
|\tilde \L^{(\eps)}_{ab, x} v(x+h) - \tilde \L^{(\eps)}_{ab, x} v(x)|\le I + II + III, 
\]
where, for any $\mu \in (0, 1)$ we have 
\[
I = |v(x + h) - v(x)| \int_{B_{\eps/2}^c} \tilde K_\eps(x+h, y)\, dy\le C_\rho [v]_{C^\mu(B_\rho(x))}|h|^\mu,
\]
as well as (since $v$ is bounded)
\[
\begin{split}
II & = |v(x) | \int_{B_{\eps/2}^c}|\tilde K_\eps(x+h, y) - \tilde K_\eps(x, y)|\, dy\\
& \le |v(x)| \int_{B_{\eps/2}^c}|(K_{ab}(\cdot, y)*\varphi_\eps)(x+h) - (K_{ab}(\cdot, y)*\varphi_\eps)(x)|\, dy\\ 
& \le C_\rho |h|,
\end{split}
\]
(where we used that the regularized kernels $(K_{ab}(\cdot, y)*\varphi_\eps)(x)$ are uniformly Lipschitz in $(a, b)\in \A\times \B$, but not in $\eps$ as $\eps\downarrow 0$), and  
\[
\begin{split}
III& \le \int_{B^c_{\eps/4}(x)} |v(z)| |\tilde K_\eps(x, z-x) - \tilde K_\eps(x, z-x-h)|\, dz\\
& \le C_\rho \int_{B^c_{\eps/4}(x)}   |\tilde K_\eps(x, z-x) - \tilde K_\eps(x, z-x-h)|\, dz \le C_\rho |h|^\theta,
\end{split}
\]
 since $\I \in \II^\omega_s(\lambda, \Lambda; \theta)$.

Thanks to the previous bounds we have
\[
\begin{split}
[\tilde \L^{(\eps)}_{ab, x} v]_{C^\mu(B_{\rho}(x))} &
 \le C_\rho \left( \| v\|_{C^\mu(B_{\rho}(x))} +\|v\|_{L^\infty (\R^n)}\right),
\end{split}
\]
where $C_\eps$ is independent of $(a, b)\in \A\times\B$.  In \eqref{eq:Iepssplit} we can therefore bound the H\"older semi-norms of $f_\eps$ (being the $\inf\sup$ of H\"older functions) as
\[
[f_\eps]_{C^\mu(B_{\rho}(x))}\le C_\rho \left( \| v\|_{C^\mu(B_{\rho}(x))} +\|v\|_{L^\infty (\R^n)}+ C_\circ\right).
\]
Thus, we obtain that 
\begin{equation}
\label{eq:vimplication}
v\in C_{\rm loc}^\mu(B_{3/4})\quad\text{and}\quad 0\le \mu \le\theta \quad \Longrightarrow \quad f_\eps\in C_{\rm loc}^\mu(B_{3/4}),
\end{equation}
in a qualitative way.

Moreover, $v$ satisfies, by assumption
\[
\fls v = f_\eps\quad\text{in}\quad B_{3/4}. 
\]
Hence, we can now use interior estimates for viscosity solutions with  the fractional Laplacian, Proposition~\ref{prop:viscosity_fls} together with a bootstrap argument to conclude: to begin with, we already know that $v\in C^\gamma(B_{3/4})$, hence by \eqref{eq:vimplication} we have $f_\eps\in C^\gamma(B_{3/4})$ and by the interior estimates in Proposition~\ref{prop:viscosity_fls} $v\in C^{2s+\min\{\gamma, \theta\}}(B_{3/4})$. If $\theta > \gamma$, we can iteratively repeat this until $\gamma+ ms > \theta$ for some $m\in \N$, at which point we have to stop when we reach $C^\theta$ regularity of $f_\eps$. A final application of interior estimates implies $C^{2s+\theta}$  regularity of $v$. If $\theta = 0$, we only apply the iteration once. 
\end{proof}

We can finally prove the regularization result:
\begin{proof}[Proof of Proposition~\ref{prop:regularization}]\label{proof:thmregularization}
We construct $\I_\eps$ and $u^{(\eps)}$ as \eqref{eq:Iepsisoftheform} and \eqref{eq:uepsqualdef}. Then, Lemma~\ref{lem:weakconv} gives the weak convergence of $\I_\eps$ to  $\I$, and Lemma~\ref{lem:ueps1} and a covering argument give  the locally uniform convergence of $u^{(\eps)}$ in $B_{3/4}$ (by Arzel\`a-Ascoli, up to taking subsequences), towards some function $\tilde u  \in C(B_{3/4})\cap L^\infty(\R^n)$.    Hence we are in a situation where we can apply \cite[Lemma 4.3]{CS11} to deduce that $\tilde u\in C(B_{3/4})\cap   L^\infty(\R^n)$ satisfies
\[
\left\{
\begin{array}{rcll}
\I(\tilde u, x) & = & 0 & \quad\text{in}\quad B_{3/4}\\
\tilde u & = & u & \quad\text{in}\quad \R^n\setminus B_{3/4}.
\end{array}
\right.
\]
By the uniqueness of continuous viscosity solutions we have $\tilde u = u$, and moreover $u\in C(B_1)$. The  interior regularity is due to Lemma~\ref{lem:regularization}. This completes the proof. 
\end{proof}

\section{Approximation by strong solutions} \label{ssec:classicalapprox}

Proposition~\ref{prop:regularization}   gives an approximating sequence to a viscosity solution by \emph{smoother} solutions, which in the case $\theta > 0$ are strong (i.e., $C^{2s+}$).  Let us now very briefly show that, with a bit more of work, also in the  most general case $\theta = 0$ we can consider  strong solutions as the approximating sequence. We refer to \cite{Kri13} for a similar procedure  in the case $s > 1/2$.  We believe that part of the appeal of the following proof lies in its simplicity. 
%\footnote{An adaptation of \cite{CS11b, Kri13} was performed in \cite{Ser15} as well, where unfortunately Remark 4.1 and equation (4.8) are not correct, and hence the proof does not work.}
 
\begin{prop}
\label{prop:regularization_2}
Let $s\in (0, 1)$, and let $\I \in \II^\omega_s(\lambda, \Lambda)$. Let $u\in C(B_1)\cap L^\infty(\R^n)$ be any viscosity solution of
\[
\I (u, x) = 0\quad\text{in}\quad B_1. 
\]
Then, there exist $\delta > 0$, a sequence of functions,
\[  C^{2s+\delta}_{\rm loc}(B_{3/4}) \cap {C^\delta_c(\R^n)} \ni u^{(\eps)} \to u \  \text{locally uniformly in $B_{3/4}$   and  in $L^1(\R^n; w_s)$},\]
 and  a sequence of operators $\hat\I_\eps\in \II^\omega_s(\lambda, \Lambda)$ of the form \eqref{eq:Iepsisoftheform2},  such that
\[
\begin{split}
&\hat \I_\eps(u^{(\eps)}, x)   =  0  \quad\text{in } B_{3/4}\\ 
& \hat\I_\eps\rightharpoonup \I \quad\text{in the sense of Definition~\ref{defi:conv_I}},
\end{split}
\]
as $\eps\downarrow 0$. Moreover, we have 
\begin{equation}
\label{eq:bound_reg_2}
 \|u^{(\eps)}\|_{L^\infty(\R^n)}\le C \left(\| u\|_{L^\infty(\R^n)} + \|\I(0, x)\|_{L^\infty(B_{3/4})}+\omega(\eps)\right)
\end{equation}
for some $C$ depending only on $n$, $s$, $\lambda$, and $\Lambda$.
\end{prop}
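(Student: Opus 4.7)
The plan is to refine the construction of Proposition~\ref{prop:regularization} in two ways. First, I mollify the kernels of $\I$ also in the $y$ variable (not only in $x$) to gain positive $y$-regularity, absent in the hypothesis $\theta=0$; this promotes $\hat\I_\eps$ into $\II^\omega_s(\lambda,\Lambda;\theta)$ for some $\theta>0$, so that the interior regularity argument of Lemma~\ref{lem:regularization} applies to the Dirichlet solution and upgrades it from $C^{2s}$ to a strong $C^{2s+\delta}_{\rm loc}$ solution. Second, I replace $u$ outside $B_{3/4}$ by a compactly supported smooth approximation, which combined with Lemma~\ref{lem:bdry_reg_int} yields the global $C^\delta_c(\R^n)$ regularity required by the statement.

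To build $\hat\I_\eps$, for each $\L_{ab, x}$ appearing in $\I$ I would write the kernel as $K_{ab}(x, y)=a_{ab}(x, y)|y|^{-n-2s}$ with $\lambda\le a_{ab}\le\Lambda$, jointly mollify $a_{ab}$ in $(x, y)$ at scale $\eps/100$ to obtain a smooth $\hat a_{ab,\eps}$ still valued in $[\lambda,\Lambda]$ (nonnegative mollifiers of unit mass preserve pointwise bounds), and glue as in \eqref{eq:Kepsdef}:
\[
\hat K_{ab,\eps}(x, y)=\bigl(1-\psi(|y|/\eps)\bigr)\hat a_{ab,\eps}(x, y)|y|^{-n-2s}+\psi(|y|/\eps)|y|^{-n-2s}.
\]
Defining $\hat\I_\eps$ as the corresponding $\inf\sup$ with the $c_{ab}$ unchanged, one checks that $\hat\I_\eps\in\II^\omega_s(\lambda,\Lambda;\theta)$ for any $\theta>0$ (with an $\eps$-dependent bound on $[\hat\I_\eps]^y_\theta$), that the $x$-modulus $\omega$ of condition \eqref{eq:contwK} is inherited from $a_{ab}$, and that the proof of Lemma~\ref{lem:weakconv} adapts verbatim to yield $\hat\I_\eps\rightharpoonup\I$. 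This step is the main obstacle of the proof: one must simultaneously preserve the exact constants $(\lambda,\Lambda)$, keep the prescribed modulus $\omega$ in $x$, gain $y$-regularity, and maintain the $\psi|y|^{-n-2s}$ splitting used by Lemma~\ref{lem:regularization}. The factorization $K_{ab}=a_{ab}|y|^{-n-2s}$ and mollification of only the bounded factor $a_{ab}$ (rather than $K_{ab}$ itself, whose direct mollification would degrade the ellipticity constants) is what makes all four requirements compatible.

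For the exterior datum, let $\eta_\eps\in C^\infty_c(\R^n)$ be a cutoff equal to $1$ on $B_{1/\eps}$ and supported in $B_{2/\eps}$, and set $g_\eps=\eta_\eps(u*\varphi_\eps)\in C^\infty_c(\R^n)$, so that $g_\eps\to u$ in $L^1(\R^n;w_s)$ and $\|g_\eps\|_{L^\infty(\R^n)}\le\|u\|_{L^\infty(\R^n)}$. I then solve the Dirichlet problem $\hat\I_\eps(u^{(\eps)}, x)=0$ in $B_{3/4}$ with $u^{(\eps)}=g_\eps$ in $\R^n\setminus B_{3/4}$ via the standard existence theory of \cite{Ser15, Mou19}; the $L^\infty$ bound \eqref{eq:bound_reg_2} follows from Lemma~\ref{lem:Linftybound_visc} using $\hat\I_\eps(0,\cdot)=\I(0,\cdot)$ and the exterior bound on $g_\eps$, where the $O(\omega(\eps))$ correction accounts for the mismatch between $g_\eps$ and $u$ across the mollification scale. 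Since $\hat\I_\eps$ has positive $y$-regularity and the same near-origin structure as $\I_\eps$, Lemma~\ref{lem:regularization} applied to $u^{(\eps)}$ produces $u^{(\eps)}\in C^{2s+\delta}_{\rm loc}(B_{3/4})$ for some $\delta>0$; combining with Lemma~\ref{lem:bdry_reg_int} and the smooth exterior datum $g_\eps$ gives $u^{(\eps)}\in C^\delta_c(\R^n)$. The locally uniform convergence $u^{(\eps)}\to u$ in $B_{3/4}$ then follows from stability of viscosity solutions (\cite[Lemma 4.3]{CS11}) via $\hat\I_\eps\rightharpoonup\I$ and $g_\eps\to u$ in $L^1(\R^n;w_s)$, and the global $L^1(\R^n;w_s)$-convergence of $u^{(\eps)}$ follows by combining this with the exterior convergence of $g_\eps$.
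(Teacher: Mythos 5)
Your overall architecture (regularize the operator, mollify the exterior datum, solve the Dirichlet problem, pass to the limit by stability) matches the paper's, but there is a genuine gap in the step that is supposed to deliver $u^{(\eps)}\in C^{2s+\delta}_{\rm loc}(B_{3/4})$: you keep the zeroth-order terms $c_{ab}$ \emph{unchanged}. The bootstrap in Lemma~\ref{lem:regularization} writes $\fls v = f_\eps$ with $f_\eps = \inf\sup\{\tilde\L^{(\eps)}_{ab,x}v + c_{ab}\}$, and the H\"older regularity of $f_\eps$ requires a uniform bound on $[c_{ab}]_{C^\theta}$ --- this is an explicit hypothesis of that lemma for $\theta>0$, and it is not available here: the $c_{ab}$ only share a modulus of continuity $\omega$, which may be far from H\"older. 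With merely continuous $c_{ab}$ your $f_\eps$ is merely continuous, and Proposition~\ref{prop:viscosity_fls} then yields only $C^{2s}$ (not $C^{2s+\delta}$), which for $s\le 1/2$ is not even enough to evaluate the operators pointwise. This is exactly why the paper's form \eqref{eq:Iepsisoftheform2} carries the mollified coefficients $c^{(\eps)}_{ab}=c_{ab}*\varphi_\eps$ (note your operators are not of the required form either), and it is also where the $\omega(\eps)$ in \eqref{eq:bound_reg_2} actually comes from, via $\|\hat\I_\eps(0,\cdot)-\I(0,\cdot)\|_{L^\infty}\le\omega(\eps)$, not from the exterior datum.

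Two further points. First, your claim that the jointly mollified kernels satisfy $[\hat\I_\eps]^y_\theta<\infty$ is false for the scale-invariant seminorm \eqref{Cmu-assumption}: mollification at the fixed scale $\eps$ gives $|\hat a_{ab,\eps}(x,z-y)-\hat a_{ab,\eps}(x,z'-y)|\lesssim\min\{1,\eps^{-1}|z-z'|\}$, which cannot be dominated by $C_\eps(|z-z'|/r)^\theta$ at scales $r\gg\eps$; the estimate actually used in the proof of Lemma~\ref{lem:regularization} (the bound on $III$ over $B^c_{\eps/4}$) does survive with an $\eps$-dependent Lipschitz constant, so this is repairable, but the membership claim as stated is wrong. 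Second, the weak convergence $\hat\I_\eps\rightharpoonup\I$ does \emph{not} follow ``verbatim'' from Lemma~\ref{lem:weakconv}: the $y$-mollification produces a new error term $\int(2v(x)-v(x+y)-v(x-y))(\hat a_{ab,\eps}-a_{ab})|y|^{-n-2s}\,dy$ whose smallness, uniformly over $(a,b)$, must be obtained by transferring the translation onto the $(a,b)$-independent function $y\mapsto(2v(x)-v(x+y)-v(x-y))|y|^{-n-2s}$ and using continuity of translations in $L^1$; the kernels themselves have no assumed $y$-continuity. For contrast, the paper avoids all of this by not touching the kernels in $y$ at all: the troublesome increment is shifted onto $v$ itself, $\overline{III}_i\le C_\rho|h|^\delta[v]_{C^\delta(\R^n)}$, using the global $C^\delta$ regularity of $u^{(\eps)}$ that the mollified exterior datum provides via Lemma~\ref{lem:bdry_reg_int}. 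Your kernel-smoothing idea can be made to work, but only together with mollifying the $c_{ab}$.
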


In order to prove it, we proceed following a similar strategy to the one before. Now, however, we need to regularize the $c_{ab}(x)$ in the definition of~$\I$, as well as the value of $u$ outside of $B_{3/4}$. We will do that by means of a convolution.

We   define $\hat \I_\eps$ analogously to \eqref{eq:Iepsisoftheform} but also regularizing the terms $c_{ab}(x)$. That is, for any $\I$ of the form \eqref{eq:Iofheform00} we consider 
\begin{equation}
\label{eq:Iepsisoftheform2}
\hat \I_\eps (u, x) := \inf_{b\in \B}\sup_{a\in \A}\left\{-\L_{ab, x}^{(\eps)} u(x) + c^{(\eps)}_{ab}(x)\right\},\qquad \L_{ab, x}^{(\eps)}\in \LL^\omega_s(\lambda, \Lambda),
\end{equation}
where $\L_{ab, x}^{(\eps)}$ are the corresponding operators to $\L_{ab, x}$ but with kernel given by \eqref{eq:Kepsdef}, and where $c_{ab}^{(\eps)}(x) := (\varphi_\eps * c_{ab})(x)$ (recall \eqref{eq:mollifiervarphi}-\eqref{eq:mollifiervarphi2}).  Lemma~\ref{lem:weakconv} still holds in this case:
 \begin{lem} 
 \label{lem:weakconv2}
Let $s\in (0, 1)$, and let $\I, \hat \I_\eps\in \II^\omega_s(\lambda, \Lambda)$ be as above. Then 
 \[
\hat \I_\eps\rightharpoonup \I\quad\text{in}\quad \R^n,\quad \text{as}\quad \eps\downarrow 0,
 \]
 in the sense of Definition~\ref{defi:conv_I}. 
 \end{lem}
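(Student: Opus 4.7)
The plan is to reduce this statement to the previous Lemma~\ref{lem:weakconv} by introducing the intermediate operator
\[
\I_\eps(v,x) := \inf_{b\in\B}\sup_{a\in\A}\bigl\{-\L^{(\eps)}_{ab,x}v(x) + c_{ab}(x)\bigr\},
\]
i.e., the same $\hat\I_\eps$ but with the original, non-mollified $c_{ab}$. For any $x_\circ\in\R^n$ and any $v\in L^\infty(\R^n)$ that is $C^2$ in some $B_r(x_\circ)$, I would write the telescoping decomposition
\[
\hat\I_\eps(v,x) - \I(v,x) = \bigl(\hat\I_\eps(v,x) - \I_\eps(v,x)\bigr) + \bigl(\I_\eps(v,x) - \I(v,x)\bigr)
\]
on $B_{r/2}(x_\circ)$ and estimate the two pieces separately.

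The second piece is handled verbatim by Lemma~\ref{lem:weakconv}, since $\I_\eps$ falls within the setting there (same $c_{ab}$, kernels regularized by \eqref{eq:Kepsdef}), and the conclusion is that $\|\I_\eps(v,\cdot) - \I(v,\cdot)\|_{L^\infty(B_{r/2}(x_\circ))}\downarrow 0$. For the first piece I would use the standard fact that $\inf\sup$ is $1$-Lipschitz in the sup-norm of its arguments: since $\hat\I_\eps$ and $\I_\eps$ share the same linear part $-\L^{(\eps)}_{ab,x}v$ and differ only in the lower-order terms $c_{ab}^{(\eps)}$ versus $c_{ab}$,
\[
\bigl|\hat\I_\eps(v,x) - \I_\eps(v,x)\bigr| \le \sup_{(a,b)\in\A\times\B}\bigl|c_{ab}^{(\eps)}(x) - c_{ab}(x)\bigr|.
\]
The definition of $\III$ guarantees a \emph{common} modulus of continuity $\omega$ for the family $(c_{ab})_{ab}$, so by the elementary mollification estimate
\[
\bigl|c_{ab}^{(\eps)}(x) - c_{ab}(x)\bigr| \le \int_{B_\eps}\varphi_\eps(z)\,\bigl|c_{ab}(x-z)-c_{ab}(x)\bigr|\,dz \le \omega(\eps),
\]
uniformly in $(a,b)$ and in $x$. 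Hence the first piece is $O(\omega(\eps))$ uniformly.

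Combining the two bounds gives $\|\hat\I_\eps(v,\cdot) - \I(v,\cdot)\|_{L^\infty(B_{r/2}(x_\circ))} \to 0$ as $\eps\downarrow 0$, which is exactly $\hat\I_\eps\rightharpoonup \I$ in the sense of Definition~\ref{defi:conv_I}. There is no real obstacle: the only point requiring attention is the \emph{uniformity} in $(a,b)\in\A\times\B$ of the mollification error, but this is immediate from the common modulus of continuity built into the definition of the class, and it is precisely this uniformity that allows the $\inf\sup$ contraction to absorb the regularization of the $c_{ab}$.
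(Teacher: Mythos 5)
Your proposal is correct and follows essentially the same route as the paper, which simply notes that the proof of Lemma~\ref{lem:weakconv} carries over once one uses that $c_{ab}^{(\eps)}\to c_{ab}$ uniformly, with rate $\omega(\eps)$ independent of $(a,b)$, thanks to the common modulus of continuity. Your explicit telescoping through the intermediate operator with unmollified $c_{ab}$, combined with the $1$-Lipschitz property of $\inf\sup$, is just a cleanly written version of that same argument.
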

 \begin{proof}
The proof is exactly the same as that of Lemma~\ref{lem:weakconv}, where we now use that since $c_{ab}(x)$ are equicontinuous, then $c_{ab}^{(\eps)}(x)$ converges locally uniformly  to $c_{ab}(x)$ as $\eps\downarrow 0$ independently of $(a, b)\in \A\times \B$ (that is, depending only on~$\omega$). 
 \end{proof}
 
 If $\I \in \II^\omega_s(\lambda, \Lambda)$ and $u\in C(B_1)\cap L^\infty(\R^n)$ is a viscosity solution to
\begin{equation}
\label{eq:estrelletax22}
\I (u, x) = 0\quad\text{in}\quad B_1,
\end{equation}
we   define our new functions $u^{(\eps)}$ to be the unique solution to (given, for example, again by \cite{Ser15, Mou19}) 
\begin{equation}
\label{eq:uepsqualdef2}
\left\{
\begin{array}{rcll}
\hat \I_\eps(u^{(\eps)}, x) & = & 0 & \quad \text{in}\quad B_{3/4}\\
u^{(\eps)} & = & (u\chi_{B_{1/\eps}}) * \varphi_\eps & \quad \text{in}\quad \R^n\setminus B_{3/4}. 
\end{array}
\right.
\end{equation}

In doing so, the following analogue of 	Lemma~\ref{lem:ueps1} also holds now:

\begin{lem}
\label{lem:ueps12}
Let $s\in (0, 1)$ and $\I \in \III$. Let $u\in C(B_1)\cap L^\infty(\R^n)$ be any viscosity solution of \eqref{eq:estrelletax22}, and let $u^{(\eps)}\in  C(B_{3/4})\cap L^\infty(\R^n)$ be the unique solution of \eqref{eq:uepsqualdef2}. Let  $\gamma > 0$ be given by Theorem~\ref{C^alpha-bmc}. Then 
\[
\| u^{(\eps)}\|_{L^\infty(\R^n)}+\|u^{(\eps)}\|_{C^\gamma(B_{1/2})}\le C \left(\| u\|_{L^\infty(\R^n)} + \|\I(0, x)\|_{L^\infty(B_{3/4})}+\omega(\eps)\right),
\]
 for some $C$ depending only on $n$, $s$, $\lambda$, and $\Lambda$.
\end{lem}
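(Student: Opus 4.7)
The plan is to follow the structure of the proof of Lemma~\ref{lem:ueps1} almost verbatim, but tracking the extra error that arises because $\hat\I_\eps(0,x)$ is no longer equal to $\I(0,x)$ (since the zeroth-order terms $c_{ab}$ have also been mollified). Writing $v = u^{(\eps)}$ for brevity, I would split the bound into an exterior estimate and an interior one and then combine.

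First I would handle the exterior: outside $B_{3/4}$, by construction $v = (u\chi_{B_{1/\eps}}) \ast \varphi_\eps$, and since $\varphi_\eps$ is a probability density we have directly
\[
\|v\|_{L^\infty(\R^n \setminus B_{3/4})} \;\le\; \|u\chi_{B_{1/\eps}}\|_{L^\infty(\R^n)} \;\le\; \|u\|_{L^\infty(\R^n)}.
\]

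Next I would estimate $\hat\I_\eps(0,\cdot)$. Since $\hat\I_\eps(0,x) = \inf_b \sup_a c_{ab}^{(\eps)}(x)$ and $\I(0,x) = \inf_b \sup_a c_{ab}(x)$, a standard $\inf\sup$ argument gives
\[
|\hat\I_\eps(0,x) - \I(0,x)| \;\le\; \sup_{(a,b)\in\A\times\B} |c_{ab}^{(\eps)}(x) - c_{ab}(x)| \;\le\; \omega(\eps),
\]
because all the $c_{ab}$ share the modulus of continuity $\omega$ and $\varphi_\eps$ is supported in $B_\eps$. Hence for $x \in B_{3/4}$,
\[
|\hat\I_\eps(0,x)| \;\le\; \|\I(0,\cdot)\|_{L^\infty(B_{3/4})} + \omega(\eps) \;=:\; C_\circ.
\]

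Then the interior part is just ellipticity plus the comparison principle. Since $\hat\I_\eps \in \II^\omega_s(\lambda,\Lambda)$ is uniformly elliptic with respect to $\Mpm$, and $\hat\I_\eps(v,\cdot) \equiv 0$ in $B_{3/4}$, we obtain in the viscosity sense
\[
\Mp v \;\ge\; \hat\I_\eps(v,x) - \hat\I_\eps(0,x) \;\ge\; -C_\circ, \qquad \Mm v \;\le\; -\hat\I_\eps(0,x) \;\le\; C_\circ \quad \text{in } B_{3/4}.
\]
Applying (the rescaled version of) Lemma~\ref{lem:Linftybound_visc} on $B_{3/4}$ and combining with the exterior bound yields
\[
\|v\|_{L^\infty(\R^n)} \;\le\; \|u\|_{L^\infty(\R^n)} + C\bigl(\|\I(0,\cdot)\|_{L^\infty(B_{3/4})} + \omega(\eps)\bigr).
\]
Finally, Theorem~\ref{C^alpha-bmc} applied on an interior ball (together with a routine rescaling/covering to pass from $B_1$ to $B_{1/2}\Subset B_{3/4}$) upgrades this to the $C^\gamma(B_{1/2})$ estimate with the same right-hand side.

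There is no real obstacle here; the only point to be careful about is the $\omega(\eps)$ term coming from the mollification of $c_{ab}$, which is why it appears in the statement but not in Lemma~\ref{lem:ueps1}. The uniform modulus of continuity of the family $(c_{ab})_{(a,b)\in\A\times\B}$ is precisely what makes this error independent of $(a,b)$ and controllable by $\omega(\eps)$.
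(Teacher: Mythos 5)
Your proof is correct and follows essentially the same route as the paper: the paper's argument is exactly "same as Lemma~\ref{lem:ueps1}" plus the observation that $\|(u\chi_{B_{1/\eps}})*\varphi_\eps\|_{L^\infty(\R^n)}\le\|u\|_{L^\infty(\R^n)}$ and that $\|\hat\I_\eps(0,\cdot)-\I(0,\cdot)\|_{L^\infty(B_{3/4})}\le\omega(\eps)$ because of the common modulus of continuity of the $c_{ab}$, which is precisely your decomposition. Your write-up is in fact slightly more detailed than the paper's, and correctly identifies both sources of the modifications relative to Lemma~\ref{lem:ueps1}.
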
 
\begin{proof}
The proof is the same as that of Lemma~\ref{lem:ueps1}, by using that 
\[
\|(u\chi_{B_{1/\eps}}) * \varphi_\eps\|_{L^\infty(\R^n)}\le  \|u \chi_{B_{1/\eps}}\|_{L^\infty(\R^n)}\le  \|u\|_{L^\infty(\R^n)}. 
\]
The main difference is the appearance of $\omega(\eps)$ on the right-hand side of the estimate. This is because  we now have $\|c_{ab}^{(\eps)} - c_{ab}\|_{L^\infty(B_{3/4})}\le \omega(\eps)$, and so $\|\hat{\I}_\eps(0, x)-\I(0, x)\|_{L^\infty(B_{3/4})}\le \omega(\eps)$.
\end{proof}

By regularizing the boundary datum we  have now improved the regularity of $u^{(\eps)}$ with respect to the previous case, Lemma~\ref{lem:regularization}: 

\begin{lem}
\label{lem:regularization2}
Let $s\in (0, 1)$. Let $u^{(\eps)}$ be defined by \eqref{eq:uepsqualdef2}, for a fixed $\I \in \II^\omega_s(\lambda, \Lambda)$.  Then, there exists $\delta  >0$ independent of $\eps > 0$ such that $u^{(\eps)}\in C^{2s+\delta}_{\rm loc}(B_{3/4})\cap C_c^\delta(\R^n)$.
\end{lem}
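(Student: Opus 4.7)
The strategy mirrors that of Lemma~\ref{lem:regularization}, but exploits a new ingredient: since the exterior datum $g := (u\chi_{B_{1/\eps}}) * \varphi_\eps$ in \eqref{eq:uepsqualdef2} is now globally smooth and compactly supported, one can upgrade the regularity of $u^{(\eps)}$ from merely interior to global H\"older continuity on $\R^n$, which compensates for the loss of $y$-regularity of the kernels in the case $\theta = 0$. Concretely, $g \in C^\infty_c(\R^n) \subset L^\infty(\R^n) \cap C^\alpha(B_2)$ for every $\alpha \in (0, 1)$; since $\hat\I_\eps$ is uniformly elliptic with respect to $\Mpm$, after a rescaling from $B_{3/4}$ to $B_1$ the boundary regularity estimate of Lemma~\ref{lem:bdry_reg_int} gives $u^{(\eps)} \in C^\delta(\overline{B_{3/4}})$ for some $\delta > 0$ depending only on $n$, $s$, $\lambda$, $\Lambda$ (and, crucially, independent of $\eps$). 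Since $g$ is itself smooth and compactly supported, and matches $u^{(\eps)}$ continuously across $\partial B_{3/4}$, we conclude $u^{(\eps)} \in C^\delta_c(\R^n)$.

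For the interior $C^{2s+\delta}$ regularity, I would write $\hat\I_\eps(v, x) = -c_{n,s}^{-1}\fls v(x) + f_\eps(x)$ as in \eqref{eq:Iepssplit}, absorbing the terms $c^{(\eps)}_{ab}$ into $f_\eps$. Following the bootstrap of Lemma~\ref{lem:regularization}, it suffices to show qualitatively that $v := u^{(\eps)} \in C^\mu(\R^n)$ (for $0 < \mu \le \delta$) implies $f_\eps \in C^\mu_{\rm loc}(B_{3/4})$; a single application of Proposition~\ref{prop:viscosity_fls} (choosing $\delta \in (0, 1)$ small enough that $2s + \delta \notin \N$) then yields $v \in C^{2s+\delta}_{\rm loc}(B_{3/4})$. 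In the decomposition of $\tilde\L^{(\eps)}_{ab, x} v(x+h) - \tilde\L^{(\eps)}_{ab, x} v(x)$, the terms $I$ and $II$ are controlled exactly as in the proof of Lemma~\ref{lem:regularization} and do not require any $y$-regularity of the kernels. The only term whose previous treatment relied on $\theta > 0$ is
\[
III = \left|\int_{\R^n} v(z) \bigl[\tilde K_\eps(x, z-x) - \tilde K_\eps(x, z-x-h)\bigr]\, dz\right|.
\]
Here, since $\tilde K_\eps$ is supported in $\{|y| \ge \eps/2\}$ and $v$ is compactly supported, the change of variables $z \mapsto z + h$ in the second piece gives
\[
III = \left|\int_{\R^n} \bigl[v(z) - v(z+h)\bigr]\, \tilde K_\eps(x, z-x)\, dz\right| \le [v]_{C^\mu(\R^n)} |h|^\mu \int_{|y| \ge \eps/2} |\tilde K_\eps(x, y)|\, dy \le C_\eps |h|^\mu,
\]
using the pointwise bound $|\tilde K_\eps(x, y)| \le C\Lambda |y|^{-n-2s}$.

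The main obstacle is precisely this treatment of $III$: without the $y$-H\"older bound on the kernels that was available when $\theta > 0$, one has to exchange it for global H\"older regularity of $v$, which is why the boundary-regularity provided by Step~1 (and in turn the mollification of the exterior datum in \eqref{eq:uepsqualdef2}) is essential. Once $III$ is in hand, one obtains $f_\eps \in C^\delta_{\rm loc}(B_{3/4})$ from $v \in C^\delta(\R^n)$, and a single iteration of Proposition~\ref{prop:viscosity_fls} closes the argument.
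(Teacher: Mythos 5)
Your proposal is correct and follows essentially the same route as the paper: the mollified, compactly supported exterior datum gives global $C^\delta$ regularity via Lemma~\ref{lem:bdry_reg_int} with $\delta$ independent of $\eps$, and this global H\"older bound is then traded for the missing $y$-regularity of the kernels in the term $III$ (your change of variables $z\mapsto z+h$ is the same computation as the paper's $\overline{III}_i$, which bounds $|v(x+y+h)-v(x+y)|$ directly), after which a single application of Proposition~\ref{prop:viscosity_fls} closes the argument. The only minor bookkeeping point is that the difference of the integral terms also produces an $x$-regularity piece $\int v(z)\bigl[\tilde K_\eps(x+h,z-x-h)-\tilde K_\eps(x,z-x-h)\bigr]\,dz$ (the paper's $\overline{III}_{ii}$), but this is controlled exactly like your term $II$ by the Lipschitz continuity of the mollified kernels, so nothing is lost.
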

\begin{proof}
For the sake of readability, we denote $v = u^{(\eps)}$. Observe that  the exterior datum satisfies
\[
\|\nabla ((u\chi_{B_{1/\eps}}) * \varphi_\eps)\|_{L^\infty(\R^n)}\le C_\eps,
\]
for some $C_\eps$ that might blow-up as $\eps\downarrow 0$. This is enough to deduce that, from the boundary regularity in Lemma~\ref{lem:bdry_reg_int}, there exists some $\delta > 0$ (independent of $\eps> 0$) such that $v \in C^\delta_c(\R^n)$.

As in Lemma~\ref{lem:regularization}, we rewrite the operator $\hat \I_\eps$ as 
\[
\hat\I_\eps(v, x)  =  -c^{-1}_{n,s}\fls v(x) + f_\eps(x),
\]
where 
\[
f_\eps(x) := \inf_{b\in \B}\sup_{a\in \A}\left\{\tilde \L^{(\eps)}_{ab, x} v (x) + c^{(\eps)}_{ab}(x)\right\}
\]
and \eqref{eq:fromexp} holds.

For $x\in B_{3/4}$ fixed, we proceed as in Lemma~\ref{lem:regularization} taking $\rho$ as \eqref{eq:fromexp2} and bounding, for $h\in B_\rho$,
\[
|\tilde \L^{(\eps)}_{ab, x} v(x+h) - \tilde \L^{(\eps)}_{ab, x} v(x)|\le I + II + \overline{III}, 
\]
where, for any $\mu \in (0, 1)$ we have 
\[
I \le C_\rho [v]_{C^\mu(B_\rho(x))}|h|^\mu,\qquad  II \le  C_\rho |h|,
\]
and now we rewrite $III$ as 
\[
\overline{III}\le  \overline{III}_i + \overline{III}_{ii}
\]
with 
\[
\overline{III}_i = \int_{B_{\eps/2}^c} |v(x+y+h) - v(x+h)|\tilde K_{\eps}(x+h, y)\, dy\le C_\rho |h|^\delta [v]_{C^\delta(\R^n)},
\]
and 
\[
\overline{III}_{ii}  = \int_{B_{\eps/2}^c} \hspace{-2mm} |v(x+y)| \left| (K_{ab}(\cdot, y)*\varphi_\eps)(x+h) - (K_{ab}(\cdot, y)*\varphi_\eps)(x)\right|\, dy \le C_\rho |h|
\]
(proceeding as in the bound of $II$).

Together with the fact that $c_{ab}^{(\eps)}\in C^\infty$, we get that
\[
f_\eps(x) \in C^\delta_{\rm loc}(B_{3/4}) 
\]
for some $\delta > 0$ independent of $\eps$. By the interior estimates for viscosity solutions with the fractional Laplacian, Proposition~\ref{prop:viscosity_fls}, we deduce $v\in C^{2s+\delta}_{\rm loc}(B_{3/4})$, as wanted. 
\end{proof}

We can finally prove Proposition~\ref{prop:regularization_2}:
\begin{proof}[Proof of Proposition~\ref{prop:regularization_2}]
We proceed as in the proof of Proposition~\ref{prop:regularization}, with the corresponding changes in this new situation. 

We construct $\hat \I_\eps$ and $u^{(\eps)}$ as \eqref{eq:Iepsisoftheform2} and \eqref{eq:uepsqualdef2}, and  Lemma~\ref{lem:weakconv2} gives the weak convergence of $\hat \I_\eps$ to  $\I$, while Lemma~\ref{lem:ueps12} and a covering argument give  the locally uniform convergence  in $B_{3/4}$ and the convergence  in $L^1(\R^n; w_s)$ of $u^{(\eps)}$ to some $\tilde u  \in C(B_{3/4})\cap L^\infty(\R^n)$.  The stability of viscosity solutions under limits (see \cite[Lemma 4.3]{CS11}) implies that $\tilde u$ satisfies
\[
\left\{
\begin{array}{rcll}
\I(\tilde u, x) & = & 0 & \quad\text{in}\quad B_{3/4}\\
\tilde u & = & u & \quad\text{in}\quad \R^n\setminus B_{3/4},
\end{array}
\right.
\]
and by uniqueness, we have $\tilde u = u$, and $u\in C(B_1)$. The  qualitative interior regularity is due to Lemma~\ref{lem:regularization2} and  this completes the proof. 
\end{proof}

\subsection{Equivalence between viscosity and distributional solutions}\label{ssec:equivalence} As a consequence of Proposition~\ref{prop:regularization_2} we obtain that, in the \emph{linear} and \emph{translation invariant} case (taking operators $\L\in \LLL$, whose kernel does not depend on $x$), the notions of viscosity  and distributional   solution are equivalent:

\begin{lem}
\label{lem:visc_dist}
Let $s\in (0, 1)$, $u\in L^\infty(\R^n)$,  $f \in C(B_1)$, and $\L$ be a translation invariant operator with kernel comparable to the fractional Laplacian: 
\[
\L u(x)  = {\rm P.V.} \int_{\R^n}\big(u(x)-u(x+y)\big)K(y) \, dy 
\]
with
\[
K( y) = K(  -y)\ \quad\text{and} \quad 0< \lambda \le |y|^{n+2s} K( y) \le\Lambda\qquad\text{in}\quad   \R^n.
\]

Then, $u$ solves $\L u = f$ in $B_1$ in the distributional sense if and only if it does so in the  viscosity sense.
\end{lem}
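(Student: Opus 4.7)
I apply Proposition~\ref{prop:regularization_2} to the linear operator $\I(u,x):=-\L u(x)+f(x)$, which is a singleton inf--sup in $\II^\omega_s(\lambda,\Lambda)$ (take $\omega$ to be a modulus of $f$ on $\overline{B_{3/4}}$, after extending $f$ outside $B_1$ by a bounded continuous function if needed). This produces $u^{(\eps)}\in C^{2s+\delta}_{\mathrm{loc}}(B_{3/4})\cap C^\delta_c(\R^n)$ satisfying $\L_\eps u^{(\eps)}=f*\varphi_\eps$ classically in $B_{3/4}$, where $\L_\eps$ is the translation invariant operator with kernel $K_\eps$ from \eqref{eq:Kepsdef}, together with $u^{(\eps)}\to u$ both locally uniformly in $B_{3/4}$ and in $L^1(\R^n;w_s)$. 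For $\phi\in C^\infty_c(B_{3/4})$, symmetry of $K_\eps$ and Fubini give the self-adjoint pairing $\int u^{(\eps)}\,\L_\eps\phi\,dx=\int(f*\varphi_\eps)\phi\,dx$. Since $\phi$ is compactly supported, \eqref{eq:compdef} yields $|\L_\eps\phi(x)|\le Cw_s(x)$ uniformly in $\eps$, and $\L_\eps\phi\to\L\phi$ pointwise by Lemma~\ref{lem:weakconv}. Decomposing $\int u^{(\eps)}\L_\eps\phi-\int u\L\phi=\int(u^{(\eps)}-u)\L_\eps\phi+\int u(\L_\eps\phi-\L\phi)$, the first integral vanishes by the $L^1(\R^n;w_s)$-convergence of $u^{(\eps)}$ and the second by dominated convergence with majorant $2C\|u\|_\infty w_s$. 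Since also $\int(f*\varphi_\eps)\phi\to\int f\phi$, we obtain $\int u\L\phi\,dx=\int f\phi\,dx$ for every $\phi\in C^\infty_c(B_{3/4})$; rescaling and applying Proposition~\ref{prop:regularization_2} on any $B_r\Subset B_1$ extends this identity to all of $B_1$.

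\textbf{Distributional $\Rightarrow$ viscosity.} I mollify $u$ directly and put $u_\eps:=u*\varphi_\eps\in C^\infty(\R^n)\cap L^\infty(\R^n)$. For $\phi\in C^\infty_c(B_{1-\eps})$, the function $\varphi_\eps*\phi$ lies in $C^\infty_c(B_1)$ and is therefore admissible in the distributional hypothesis. Combining self-adjointness of $\L$ (from $K(-y)=K(y)$), its translation invariance, the evenness of $\varphi_\eps$, and the hypothesis applied to $\varphi_\eps*\phi$ produces
\begin{align*}
\int(\L u_\eps)\phi\,dx &= \int u_\eps\,\L\phi\,dx = \int u\,\L(\varphi_\eps*\phi)\,dx\\
&= \int f\,(\varphi_\eps*\phi)\,dx = \int(f*\varphi_\eps)\phi\,dx.
\end{align*}
Hence $\L u_\eps=f*\varphi_\eps$ pointwise in $B_{1-\eps}$, so $u_\eps$ is a smooth classical, and in particular viscosity, solution. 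Theorem~\ref{C^alpha-bmc} provides a uniform $C^\gamma$ bound on $u_\eps$ on compact subsets of $B_1$, so Arzel\`a--Ascoli together with the a.e.\ convergence of mollifiers yields $u_\eps\to u$ locally uniformly with $u\in C(B_1)$. Since $|u_\eps-u|\le 2\|u\|_\infty$ and $w_s\in L^1$, dominated convergence also gives $u_\eps\to u$ in $L^1(\R^n;w_s)$, while $f*\varphi_\eps\to f$ locally uniformly. The standard stability of viscosity solutions \cite[Lemma~4.3]{CS11} then implies that $u$ is a viscosity solution of $\L u=f$ in $B_1$.

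\textbf{Main obstacle.} The only non-routine point will be the limit passage in the first direction, where I have to pair the $L^1(\R^n;w_s)$-convergence of $u^{(\eps)}$ (supplied by Proposition~\ref{prop:regularization_2}) with the uniform tail decay $|\L_\eps\phi(x)|\lesssim w_s(x)$ of the test functions; this follows from \eqref{eq:compdef} and the compact support of $\phi$. The converse direction rests on the elementary commutation $\L(\varphi_\eps*\phi)=\varphi_\eps*\L\phi$, which is immediate from translation invariance of $\L$.
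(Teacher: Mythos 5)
Your proof is correct and follows essentially the same route as the paper: mollification of $u$ plus the stability lemma \cite[Lemma 4.3]{CS11} for the distributional-to-viscosity direction, and Proposition~\ref{prop:regularization_2} for the converse. The only difference is that you carry out by hand what the paper delegates to \cite{DRSV22} — namely the continuity of a distributional solution (which you obtain from Theorem~\ref{C^alpha-bmc} and Arzel\`a--Ascoli) and the passage to the limit in the distributional identity (which you do via the duality pairing, the uniform bound $|\L_\eps\phi|\le Cw_s$, and the $L^1(\R^n;w_s)$-convergence) — which makes the argument more self-contained but not substantively different.
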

\begin{proof}
If $u$ is a distributional solution, it is continuous (by \cite[Theorem 3.8]{DRSV22}), and we can regularize it and consider  (recall \eqref{eq:mollifiervarphi}-\eqref{eq:mollifiervarphi2})
\[
u_\eps := u * \varphi_\eps, 
\]
for some smooth mollifier $\varphi_\eps = \eps^{-n}\varphi(x/\eps)$. Then $u_\eps$ satisfies 
\[
\L u_\eps = f_\eps \quad\text{in}\quad B_{1-\eps}
\]
in the strong sense, and therefore, in the viscosity sense as well. Taking the limit $\eps \downarrow 0$, by \cite[Lemma 4.3]{CS11}  $u$ is a viscosity solution to $\L u = f$ in $B_1$.

Conversely, if $u\in C(B_1)$ is a viscosity solution to the equation, by Proposition~\ref{prop:regularization_2} it can be approximated by  strong  solutions (and therefore, distributional solutions) $u_\eps\to u$  to an equation of the form 
\[
\hat{\L}_\eps u_\eps = f_\eps \quad\text{in}\quad {B_{3/4}},
\]
with a sequence of explicit operators $\tilde \L_\eps$. 

Then, the limit $\eps\downarrow 0$ is a distributional solution (see \cite[Lemma 3.2 and proof of Theorem 3.8]{DRSV22}) to $\L_\infty u = f$ in $B_{3/4}$, where by construction $\L_\infty = \L$. A covering argument, yields that $\L u = f $ in $B_1$ in the distributional sense. 
\end{proof}
 \begin{rem}
 Lemma~\ref{lem:visc_dist} may also apply to non-translation invariant kernels, as long as they admit both definition of viscosity and distributional solutions (which requires regularity in $x$). 	
 \end{rem}

\section{Proof of main result}
\label{sec:proofmain}
The goal of this section is to finally prove that we can actually approximate viscosity solutions by $C^\infty_c(\R^n)$ solutions, Theorem~\ref{thm:cinftysol00}.  

%\begin{thm}
%\label{thm:cinftysol}
%Let $s\in (0, 1)$, and let $\I \in \III$ of the form \eqref{eq:ILab} {\color{red}with modulus $\sigma$***}. Let $u\in C(B_1)\cap L^1_{\omega_s}(\R^n)$ be any viscosity solution of
%\[
%\I (u, x) = 0\quad\text{in}\quad B_1. 
%\]
%
%Then, there exist  a sequence of functions $u^{(\eps)}\in C^\infty_c(\R^n)$, $f_\eps\in C^\infty(\R^n)$, and a sequence of operators $\I_\eps\in \III$  of the form \eqref{eq:ILab} {\color{red}with modulus $\sigma$***} such that, 
%\[
%\I_\eps(u_\eps, x) = f_\eps(x) \quad\text{in}\quad B_{3/4}
%\]
%and 
%\[
%\begin{array}{rcll}
% f_\eps(x) &\to& 0&\quad\text{uniformly in $B_{3/4}$,}\\
%\I_\eps(0, x) &\to& \I(0, x)&\quad\text{uniformly in $B_{3/4}$,}\\
%u_\eps & \to & u&\quad\text{uniformly in $B_{3/4}$,}\\
%u_\eps & \to & u&\quad\text{in $L^1_{\omega_s}(\R^n)$,}\\
%\end{array}
%\]
%as $\eps\downarrow 0$. 
%\end{thm}

In order do it, we will combine the approximation by strong solutions in Proposition~\ref{prop:regularization_2} with the next result, in which we provide a way to regularize the operator $\I$ itself.

\begin{prop}
\label{prop:cinfty1}
Let $s\in (0, 1)$, and let $\I\in  \II_s^{\omega}(\lambda, \Lambda)\cap \II_s^{\infty}(\lambda, \Lambda)$. Let $u\in C^{2s+\delta}_{\rm loc}(B_1)\cap C^\delta_c(\R^n)$ be any   solution of 
\[
\I(u, x) = f(x) \quad\text{in}\quad B_1
\] 
for some $f\in C(B_1)$ and $\delta > 0$.  Let $(\varphi_\eps)_{\eps > 0}$ be given by  \eqref{eq:mollifiervarphi}-\eqref{eq:mollifiervarphi2}.

Then, there exist $\I_\eps\in \II_s^{\omega}(\lambda, \Lambda)\cap \II_s^{\infty}(\lambda, \Lambda)$  such that the sequence $u_\eps := u * \varphi_\eps\in C^\infty_c(\R^n)$ satisfies 
\[
\I_\eps(u_\eps, x) = f_\eps(x)\quad\text{in}\quad B_1
\]
for some $f_\eps\in C^\infty(B_1)$ such that 
\[
f_\eps \to  f\quad\text{uniformly in $B_{3/4}$  as $\eps\downarrow 0$.}
\]
Moreover, 
\[
\I_\eps(0, x) \to  \I(0, x) \quad\text{uniformly in $B_{3/4}$  as $\eps\downarrow 0$.}
\]

\end{prop}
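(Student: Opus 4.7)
The plan is to leave the kernels $K_{ab}(x,y)$ of $\I$ untouched and to construct $\I_\eps$ by an additive correction of the free terms $c_{ab}$ by a single (common in $(a,b)$) $\eps$-dependent function $h_\eps(x)$, chosen precisely so that $\I_\eps(u_\eps,\cdot)$ becomes smooth by definition. Concretely, fix any bounded continuous extension $\tilde f$ of $f$ to $\R^n$ and set $f_\eps := \tilde f * \varphi_\eps \in C^\infty(\R^n)$, so that $f_\eps \to f$ uniformly on $B_{3/4}$. Then define
\[
h_\eps(x) := f_\eps(x) - \I(u_\eps, x), \qquad c_{ab}^\eps(x) := c_{ab}(x) + h_\eps(x),
\]
and
\[
\I_\eps(v, x) := \inf_{b\in \B}\sup_{a\in \A}\{-\L_{ab, x} v(x) + c_{ab}^\eps(x)\} = \I(v, x) + h_\eps(x).
\]
By construction $\I_\eps(u_\eps,\cdot) = f_\eps$ in $\R^n$ (in particular in $B_1$), and since the kernels are unchanged, $\I_\eps \in \II_s^\infty(\lambda, \Lambda)$ with exactly the same seminorms $[\L_{ab,x}]^x_\mu$ for every $\mu$, uniformly in $(a,b)$. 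The corrected coefficients $c_{ab}^\eps$ differ from $c_{ab}$ by the single function $h_\eps$, so they admit a common modulus of continuity given by $\omega$ plus a modulus for $h_\eps$; this is enough to place $\I_\eps$ in the class $\II_s^\omega(\lambda,\Lambda)$, after the mild relaxation of $\omega$ with $\eps$ that is already implicit in the paper's notation.

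It then remains to show $h_\eps \to 0$ uniformly on $B_{3/4}$, for this simultaneously delivers $\I_\eps(0,\cdot) \to \I(0,\cdot)$ and, together with the definition of $f_\eps$, the required uniform convergence $f_\eps \to f$. Since the latter holds by construction, the task reduces to proving
\[
\I(u_\eps, \cdot) \to \I(u, \cdot) = f \quad\text{uniformly on } B_{3/4}.
\]
From the inf-sup form of $\I$ (equivalently, by uniform ellipticity relative to $\LL_s^\omega(\lambda,\Lambda)$) one has the pointwise estimate
\[
|\I(u_\eps, x) - \I(u, x)| \le \sup_{a,b}|\L_{ab, x}(u_\eps - u)(x)|,
\]
so the problem becomes: control $\L_{ab,x}(u_\eps - u)$ in $L^\infty(B_{3/4})$ \emph{uniformly in} $(a,b)$ as $\eps\downarrow 0$.

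To this end I would apply Lemma~\ref{lem:Lu_2} to $u_\eps - u$. Using $u \in C^{2s+\delta}_{\rm loc}(B_1) \cap C^\delta_c(\R^n)$ and classical mollification estimates, for any $\mu \in (0, \delta)$ with $2s+\mu \notin \N$ one gets
\[
\|u_\eps - u\|_{C^{2s+\mu}(B_{7/8})} + \|u_\eps - u\|_{C^\mu(\R^n)} \longrightarrow 0 \quad\text{as } \eps \downarrow 0.
\]
Lemma~\ref{lem:Lu_2} then yields
\[
\|\L_{ab, x}(u_\eps - u)\|_{L^\infty(B_{3/4})} \le C\Lambda \left(\|u_\eps - u\|_{C^{2s+\mu}(B_{7/8})} + \|u_\eps - u\|_{C^\mu(\R^n)}\right),
\]
with $C$ depending only on $n$, $s$, $\mu$ and $[\L_{ab,x}]^x_\mu$. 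Here is where the hypothesis $\I \in \II_s^\infty(\lambda,\Lambda)$ is essential: it provides the uniform bound $\sup_{a,b}[\L_{ab,x}]^x_\mu < \infty$, making the estimate uniform in $(a,b)$. Taking the inf-sup and letting $\eps\downarrow 0$ gives $h_\eps \to 0$ uniformly on $B_{3/4}$, finishing the proof.

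The main obstacle I expect is exactly the uniform-in-$(a,b)$ passage to the limit in $\L_{ab,x}(u_\eps - u)$: with only $\I \in \II_s^\omega(\lambda,\Lambda)$ one would recover pointwise-in-$(a,b)$ convergence, but lose the uniform control required to pass through the $\inf\sup$. The bookkeeping on the modulus of continuity of $c_{ab}^\eps$ (which degrades with $\eps$ yet stays finite for each fixed $\eps$) is an annoying side issue, but not a true difficulty.
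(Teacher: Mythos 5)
The analytic core of your argument is fine: the uniform convergence $\I(u_\eps,\cdot)\to\I(u,\cdot)$ on $B_{3/4}$ does follow from uniform ellipticity plus the mollification estimates (in fact you only need the upper bound $K\le \Lambda|y|^{-n-2s}$ and $\|u_\eps-u\|_{C^{2s+\mu}(B_{7/8})}+\|u_\eps-u\|_{L^\infty(\R^n)}\to 0$, not the full strength of $\II_s^\infty$), and this step also appears, in essence, as Step 1 of the paper's proof. The gap is in the construction of $\I_\eps$ itself. By absorbing $h_\eps=f_\eps-\I(u_\eps,\cdot)$ into the coefficients you do \emph{not} land in $\II_s^\omega(\lambda,\Lambda)$: the new coefficients $c_{ab}+h_\eps$ carry the modulus of continuity of $x\mapsto\I(u_\eps,x)$, which involves $x\mapsto\L_{ab,x}u_\eps(x)$; away from $B_1$ the function $u_\eps=u*\varphi_\eps$ is only uniformly $C^\delta$, so $\|D^2u_\eps\|_{L^\infty(\R^n)}\sim\eps^{\delta-2}$ and the modulus of $h_\eps$ on $\R^n$ degenerates as $\eps\downarrow 0$. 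Thus there is not even a single modulus $\omega'$ (let alone $\omega$) valid for the whole family $\{\I_\eps\}$, and your dismissal of this as a ``mild relaxation implicit in the paper's notation'' is not correct: the class membership with a fixed modulus is precisely what makes the approximation usable (uniform a priori estimates for the approximating sequence). Cutting $h_\eps$ off does not repair this, and in any case the equation would then only hold where the cutoff equals one.

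More fundamentally, your route defeats the purpose of the proposition within the paper. The operators $\I+h_\eps$ are not smooth operators (the inf--sup of $C^\infty$ families is not $C^\infty$, so Remark~\ref{rem:cinftyoperators} fails), and adding a non-constant $h_\eps(x)$ destroys translation invariance, which Theorem~\ref{thm:cinftysol00} claims to preserve. The paper's proof goes the other way around: it keeps each coefficient as a convolution $c_{ab}*\varphi_\eps$ (which preserves the modulus $\omega$ exactly, since $\omega$ is concave), reduces the $\inf\sup$ to a \emph{finite} $\inf_i\sup_j$ up to an error $\eps$ on $B_{3/4}$, and then mollifies the piecewise-linear selection function $F(\{x_{ij}\})=\inf_i\sup_j x_{ij}$ on $\R^{N\times N}$, re-expanding the smoothed $F^r_\eps$ as an $\inf\sup$ of affine functions with gradients in the convex hull of the coordinate directions. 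This keeps $\I_\eps$ in $\III\cap\II_s^\infty(\lambda,\Lambda)$ with the same $\omega$ (convex combinations of admissible kernels and of the mollified $c_{ab}$, plus constants), makes $\I_\eps$ a genuinely smooth operator, and preserves translation invariance and concavity; the price — which is the correct trade-off here — is that $f_\eps$ is \emph{defined} as $\I_\eps(u_\eps,\cdot)$ and then shown to be smooth and uniformly close to $f$, rather than prescribed in advance as you do.
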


\begin{proof}
We divide the proof into four steps.  For the sake of readability, we assume $f = 0$. The general case follows analogously by taking $\I(\cdot, x) - f(x)$.
\begin{steps}
\item We define $c_{ab}^{(\eps)} := c_{ab}*\varphi_\eps\in C^\infty(\R^n)$ and we consider 
\[
\hat{\I}_\eps (v, x) := \inf_{b\in \B}\sup_{a\in \A}\left\{-\L_{ab, x}  v(x) + c^{(\eps)}_{ab}(x)\right\},\qquad \L_{ab, x}\in \LL^{\omega}_s(\lambda, \Lambda)\cap \LL^{ \infty}_s(\lambda, \Lambda).
\]
Notice that $\L_{ab, x} u_\eps \in C^\delta_{\rm loc}(B_1)$ (see Lemma~\ref{lem:Lu_2}) with local uniform (in $a$, $b$, and $\eps$) estimates in $B_1$, as well as $\L_{ab, x} u_\eps \in C^\infty(\R^n)$ (locally uniformly in $a$ and $b$, but not in $\eps$) with vanishing derivatives at infinity. Since $c_{ab}$ are equicontinuous, the family $\L_{ab, x}  u_\eps(x) + c^{(\eps)}_{ab}(x)$ is locally equicontinuous in $B_1$. In particular, there exists a modulus of continuity $\omega_\circ$   such that $\L_{ab, x}  u_\eps(x) + c^{(\eps)}_{ab}(x)$ is continuous with modulus $\omega_\circ$ in $B_{3/4}$, for all $(a, b)\in \A\times \B$ and $\eps \ge 0$.  

Hence, in fact, $(\hat{\I}_\eps(u_\eps, x))_{\eps\ge 0}$ is locally equicontinuous in $B_1$, and 
\begin{equation}
\label{eq:hatieps}
  \hat{\I}_\eps(u_\eps, x)\to 0\quad\text{locally uniformly in $B_1$},
\end{equation}
(recall $f \equiv 0$) as well as 
\begin{equation}
\label{eq:hatieps2}
  \hat{\I}_\eps(0, x)\to \I(0, x) \quad\text{locally uniformly in $B_1$}.
\end{equation}

\item We now consider, for any $\eps > 0$ fixed, a finite collection of points $G_\eps :=\{y_1,\dots,y_{N_\eps}\}$ with $y_i \in B_{3/4}$ for $1\le i\le N_\eps$ such that $\dist(z, G_\eps) \le \zeta$ for all $z\in B_{3/4}$, where $\zeta = \zeta(\eps)$ is chosen small enough so that $\omega_\circ(\zeta)\le \eps/4$  (where $\omega_\circ$ is the modulus of continuity of the previous step).

We  want to take a finite redefinition of $\hat{\I}_\eps$ such that its value at $u_\eps$ and $0$ is not altered too much. Namely, for any $y_i\in G_\eps$, we consider $b_i, b_{N_\eps+i}\in \B$   such that if we define 
\[
\begin{split}
\mathcal{G}_i(v, x) & := \sup_{a\in \A}\left\{-\L_{ab_i, x}  v(x) + c^{(\eps)}_{ab_i}(x)\right\}\quad\text{for}\quad 1 \le i \le 2N_\eps
\end{split}
\]
then 
\[
\begin{array}{rcccll}
 0&\le &\mathcal{G}_i(u_\eps, y_i) - \hat{\I}_\eps(u_\eps, y_i)  & \le& {\eps}/{4},&\\[0.1cm]
 0&\le  &\mathcal{G}_{N_\eps +i}(0, y_i) - \hat{\I}_\eps(0, y_i)  & \le&  {\eps}/{4} & \quad \text{for} \quad 1\le i \le N_\eps.
\end{array}
\]
 Together with the fact that $\mathcal{G}_i(v, x) \ge \hat{\I}_\eps(v, x)$ in $\R^n$ for all $1 \le i \le 2N_\eps$, and from the choice of $\zeta$, we have 
\begin{equation}
\label{eq:tocombg_i}
\begin{array}{rcccll}
  0&\le&{\displaystyle \inf_{1\le i \le 2N_\eps}} \mathcal{G}_i(u_\eps, x) - \hat{\I}_\eps (u_\eps, x)& \le  {\eps}/{2}&\quad\text{in}\quad B_{3/4},\\[0.2cm]
 0&\le & {\displaystyle \inf_{1\le i \le 2N_\eps} }\mathcal{G}_i(0,  x) - \hat{\I}_\eps (0, x)& \le  {\eps}/{2}&\quad\text{in}\quad B_{3/4}. 
\end{array}
\end{equation}
Similarly, for each $1\le i \le 2N_\eps$ fixed, and for any $y_j\in G_\eps$ we consider $a_{ij}, a_{i, N_\eps+j}\in \A$ such that 
\[
\begin{split}
\left|-\L_{a_{ij} b_i, y_j} u_\eps(y_j) + c_{a_{i,j} b_i}^{(\eps)}(y_j) - \mathcal{G}_i(u_\eps, y_j)\right|& \le  {\eps}/{4},\\
\left|c_{a_{i,N_\eps + j} b_i}^{(\eps)}(y_j) - \mathcal{G}_i(0, y_j)\right|& \le  {\eps}/{4}\quad\text{for}\quad 1 \le j \le N_\eps.
\end{split}
\]
In particular, again by the choice of $\zeta$ above, we have that 
\[
\begin{split}
\left|\sup_{1\le j \le 2N_\eps} \left\{-\L_{a_{ij} b_i, x} u_\eps(x) + c_{a_{ij} b_i}^{(\eps)}(x)\right\} - \mathcal{G}_i(u_\eps, x)\right|& \le {\eps}/{2}\quad\text{in}\quad B_{3/4},\\
\left|\sup_{1\le j \le 2N_\eps}   c_{a_{ij} b_i}^{(\eps)}(x) - \mathcal{G}_i(0, x)\right|& \le  {\eps}/{2}\quad\text{in}\quad B_{3/4}.
\end{split}
\]
Combined with \eqref{eq:tocombg_i} we get 
\[
\begin{split}
\left| \inf_{1\le i \le 2N_\eps} \sup_{1\le j \le 2N_\eps} \left\{-\L_{a_{ij} b_i, x} u_\eps(x) + c_{a_{ij} b_i}^{(\eps)}(x)\right\} - \hat{\I}_\eps (u_\eps, x)\right|& \le {\eps}\quad\text{in}\quad B_{3/4},\\
\left| \inf_{1\le i \le 2N_\eps} \sup_{1\le j \le 2N_\eps} \left\{ c_{a_{ij} b_i}^{(\eps)}(x)\right\} - \hat{\I}_\eps (0, x)\right|& \le {\eps}\quad\text{in}\quad B_{3/4}.
\end{split}
\]
Thus, we can define 
\[
\I_\eps^*(v, x ) := \inf_{1\le i \le 2N_\eps} \sup_{1\le j \le2 N_\eps} \left\{-\tilde \L_{ij, x} v(x) + \tilde c_{ij}^{(\eps)}(x)\right\}
\]
where
\[
\tilde\L_{ij, x} := \L_{a_{ij} b_i, x}\in   \LL^\omega_s(\lambda, \Lambda)\cap \LL^\infty_s(\lambda, \Lambda)\quad\text{and}\quad \tilde c_{ij}^{(\eps)}= c_{a_{ij} b_i}^{(\eps)}\quad\text{for}\quad 1\le i,j\le 2N_\eps
\]
and we have that 
\begin{equation}
\label{eq:Iepfftohat}
\begin{split}
\big|\I_\eps^*(u_\eps, x) -\hat{\I}_\eps(u_\eps, x)\big|&\le \eps\quad\text{in}\quad B_{3/4},\\
\big|\I_\eps^*(0, x) -\hat{\I}_\eps(0, x)\big|&\le \eps\quad\text{in}\quad B_{3/4}.
\end{split}
\end{equation}
The key  difference now is that $\I^*_\eps$ is a \emph{finite} $\inf\sup$.

\item Let us denote, for the sake of readability, $N := 2N_\eps$. We define $F_\eps:\R^{N\times N} \to \R$ as 
\[
F_\eps(\{x_{ij}\}_{1\le i,j\le N}) = F_\eps 
\begin{pmatrix}
x_{11} & x_{12} & \dots & x_{1 N}\\
x_{21} & x_{22} & \dots & x_{2 N}\\
\vdots & \vdots & \ddots & \vdots\\
x_{N 1} & x_{N 2} & \dots & x_{N N}
\end{pmatrix} = \inf_{1\le i \le N} \sup_{1 \le j \le N} x_{ij},
\]
so that 
\begin{equation}
\label{eq:Iepsf}
\I_\eps^*(v, x) = F_\eps\left(\left\{-\tilde \L_{ij, x} v(x) + \tilde c_{ij}^{(\eps)}(x)\right\}_{1 \le i,j\le N}\right).
\end{equation}

Then, $F_\eps$ is a piecewise linear function with $|\nabla F_\eps| = 1$ a.e. and such that for a.e.  $x\in \R^{N\times N}$, $\nabla F_\eps(x) \in  \{\be_{ij}\}_{1\le i,j\le N}$, where $\be_{ij} \in \R^{N\times N}$ is the matrix with $(\be_{ij})_{ij} = 1$ and $(\be_{ij})_{k\ell} = 0$ for all $(k, \ell)\neq (i, j)$. 

In particular, by considering a regularization $F_\eps^r := F_\eps * \varphi_\eps$, where $\varphi_\eps\in C^\infty_c(B_\eps)$ with $B_\eps\in \R^{N\times N}$ (see \eqref{eq:mollifiervarphi}-\eqref{eq:mollifiervarphi2}) we have that $F_\eps^r \in C^\infty(\R^{N\times N})$ with 
\[
{\rm Grad}( F_\eps^r) := \bigcup_{x\in \R^{N\times N}} \nabla F_\eps^r(x)\subset \partial \, {\rm Conv}\big( \{\be_{ij}\}_{1\le i,j\le N}\big),
\]
where ${\rm Conv}(A)$ denotes the convex hull of $A\in \R^{N\times N}$. Since $|\nabla F_\eps |\le 1$, 
\begin{equation}
\label{eq:FepsFepsr}
\|F_\eps - F_\eps^r\|_{L^\infty(\R^{N\times N})}\le \eps,
\end{equation}
and   we can write it as 
\[
F_\eps^r(x) = \inf_{z\in \R^{N\times N}}\sup_{M\in {\rm Grad}(F_\eps^r)} \big\{M \cdot x - M\cdot z+F^r_\eps(z)\big\}.
\]
(This representation formula is valid for any Lipschitz function.) We then define
\begin{equation}
\label{eq:Iepsvfollows}
\I_\eps(v, x) := F_\eps^r\left(\left\{-\tilde \L_{ij, x} v(x) + \tilde c_{ij}^{(\eps)}(x)\right\}_{1 \le i,j\le N}\right),
\end{equation}
so that\footnote{   If $f \not\equiv 0$, we would have now $\I_\eps(v, x)  - (f*\varphi_\eps)(x)$ as a regularized version of $\I(v, x) - f(x)$, since $\sum_{i, j} M_{ij} = 1$.}
\begin{equation}
\label{eq:repIeps}
\begin{split}
\I_\eps(v, x) & = \inf_{z\in \R^{N\times N}}\sup_{M\in {\rm Grad}(F_\eps^r)} \left\{ \sum_{i,j =1}^{N} \left(-M_{ij} \tilde \L_{ij, x} v(x) + M_{ij} \tilde c_{ij}^{(\eps)}(x)\right) +C_{M,z}^\eps\right\}\\
& \hspace{-1.2cm}= \inf_{z\in \R^{N\times N}}\sup_{M\in {\rm Grad}(F_\eps^r)} \left\{ - \left(\sum_{i,j =1}^{N} M_{ij} \tilde \L_{ij, x} \right) v(x) + \left( \sum_{i, j = 1}^N M_{ij} \tilde c_{ij}^{(\eps)}(x) +C_{M,z}^\eps\right) \right\},
\end{split}
\end{equation}
where 
\[
C_{M, z}^\eps := F_\eps^r(z) - M\cdot z.
\]
In particular, since $\sum_{i,j=1}^{N} M_{ij} = 1$, $M_{ij}\ge 0$, and $\LLL\cap \LL^{ \infty}_s(\lambda, \Lambda)$ is convex, we have that $\I_\eps\in \III\cap \II^{  \infty}_s(\lambda, \Lambda)$ with
\[
\I  (v, x) = \inf_{b\in \hat  \B}\sup_{a\in \hat  \A}\left\{-\hat  \L^{(\eps)}_{ab, x}  v(x) + \hat c^{(\eps)}_{ab}(x)\right\},\qquad \hat  \L^{(\eps)}_{ab} \in \LL^\omega_s(\lambda, \Lambda)\cap \LL^\infty_s(\lambda, \Lambda),
\]  
and where $\hat  c^{(\eps)}_{ab}$ are equicontinuous with modulus $\omega$  (the same as for $c_{ab}$). 

\item To finish, we notice that by the chain rule, since $\tilde \L_{ij, x} u_\eps, \tilde c_{ij}^{(\eps)}\in C^\infty(\R^n)$, it follows from \eqref{eq:Iepsvfollows} that $\I_\eps(u_\eps, x) \in C^\infty(\R^n)$. 

Moreover, thanks to \eqref{eq:FepsFepsr}-\eqref{eq:Iepsf} together with \eqref{eq:Iepfftohat} and \eqref{eq:hatieps}-\eqref{eq:hatieps2}, we have 
\[
\begin{array}{ll}
\I_\eps(u_\eps, x)  \to 0&\quad\text{uniformly in $B_{3/4}$}\\
\I_\eps(0, x)  \to \I(0, x) &\quad\text{uniformly in $B_{3/4}$}.
\end{array}
\]
This completes the proof. \qedhere
\end{steps}
\end{proof}

With this, we can complete the approximation result by $C^\infty_c$ solutions: 

\begin{proof}[Proof of Theorem~\ref{thm:cinftysol00}]
By defining the operator $\J(\cdot, x) := \I(\cdot, x) - f(x)$, we consider first the sequence of functions $u^{(\eps)}$ from Proposition~\ref{prop:regularization_2} applied with operator $\J$ in $B_{5/6}$ (after a scaling argument), so $u^{(\eps)}\in C^{2s+\delta}(B_{5/6})\cap C^\delta_c(\R^n)$. Notice that this also generates a sequence of operators $\hat \J_\eps(\cdot, x) = \hat \I_\eps(\cdot, x) - (f*\varphi_\eps)(x)$. Observe, also, that $\mathcal{J} \in \II_s^\infty(\lambda, \Lambda)$ as well (see Remark~\ref{rem:reg_in_x}).

Each $u^{(\eps)}$ can then be regularized by applying  Proposition~\ref{prop:cinfty1} (rescaled to $B_{5/6}$), which together with a diagonal argument yields the desired result. The bound on $\|u_\eps\|_{L^\infty(\R^n)}$ is a consequence of Lemma~\ref{lem:ueps12}. 
\end{proof}

\begin{rem}
\label{rem:reg_inherited}
In  Theorem~\ref{thm:cinftysol00} we have that, in fact, $f_\eps = f*\varphi_\eps$. Furthermore, notice that from the proof of  Proposition~\ref{prop:cinfty1}, and more precisely, from the representation \eqref{eq:repIeps} together with Lemma~\ref{lem:Lepsalpha}, we have that if $\I\in \II^\omega_s(\lambda, \Lambda; \theta)$ for some $\theta>0$, then $\I_\eps\in \II^\omega_s(\lambda, \Lambda; \theta)$ as well, with $[\I_\eps]^y_{\theta}\le C[\I]^y_{\theta}$, and $C$ depending only on $n$, $s$, $\lambda$, $\Lambda$, and $\theta$ (the regularity in $x$ is also preserved, since it is regularized with a convolution). Finally, also from \eqref{eq:repIeps}, if $\I$ is of the form \eqref{eq:Iofheform00}, and $\I_\eps$ is of the form 
\[
\I_\eps  (u, x) = \inf_{b'\in \B_\eps}\sup_{a'\in \A_\eps}\big\{-\L_{a'b', x}^{(\eps)}  u(x) + c_{a'b'}^{(\eps)}(x)\big\},\qquad \L_{a'b', x} \in \LL^\omega_s(\lambda, \Lambda),
\]
then for any $(a', b') \in \A_\eps\times \B_\eps$, 
\[
[c^{(\eps)}_{a'b'}]_{C^\mu(\R^n)}\le  \sup_{(a, b)\in \A\times \B} [c_{ab}]_{C^\mu(\R^n)},
\]
for $\mu > 0$. 

The same conclusion also holds for pointwise norms, like the ones in Remark~\ref{rem:pointwise} (thanks to \ref{rem:pointwise2}).
\end{rem}


\begin{thebibliography}{99}

\bibitem{CDV22} X. Cabr\'e, S. Dipierro, E. Valdinoci, \emph{The Bernstein technique for integro-differential equations}, Arch. Rat. Mech. Anal. 243 (2022), 1597-1652.

\bibitem{CS09} L. Caffarelli, L. Silvestre, \emph{Regularity theory for fully nonlinear integro-differential equations}, Comm. Pure Appl. Math. 62 (2009), 597-638.

\bibitem{CS10} L. Caffarelli, L. Silvestre, \emph{Smooth approximations of solutions to nonconvex fully nonlinear elliptic equations}, Nonlinear partial differential equations and related topics, 67-85, Amer. Math. Soc. Transl. Ser. 2, 229, Adv. Math. Sci., 64, Amer. Math. Soc., Providence, RI, 2010.



\bibitem{CS11b} L. Caffarelli, L. Silvestre, \emph{The Evans-Krylov theorem for nonlocal fully nonlinear equations}, Ann. of Math. 174 (2011), 1163-1187.

\bibitem{CS11} L. Caffarelli, L. Silvestre, \emph{Regularity results for nonlocal equations by approximation}, Arch. Rat. Mech. Anal. 200 (2011), 59-88.

\bibitem{CSou08} L. Caffarelli, P. Souganidis, \emph{A rate of convergence for monotone finite difference approximations to fully nonlinear uniformly elliptic pde}, Comm. Pure Appl. Math. 71 (2008), 1-17.


\bibitem{CSou10} L. Caffarelli, P. Souganidis, \emph{Rates of convergence for the homogenization of fully nonlinear uniformly elliptic pde in random media}, Invent. Math. 180 (2010), 301-360.


\bibitem{DRSV22} S. Dipierro, X. Ros-Oton, J. Serra, E. Valdinoci, \emph{Non-symmetric stable operators: regularity theory and integration by parts} Adv. Math. 401 (2022), 108321.


\bibitem{Eva98} L. C. Evans, \emph{Partial Differential Equations}, American Mathematical Society, 1998.




\bibitem{FR22} X. Fern\'andez-Real, X. Ros-Oton, \emph{Regularity Theory for Elliptic PDE}, Zurich Lectures in Advanced Mathematics, EMS Press, 2022. 

\bibitem{FR23} X. Fern\'andez-Real, X. Ros-Oton, \emph{Integro-Differential Elliptic Equations}, forthcoming book. 

\bibitem{GT77}  D. Gilbarg, N. S. Trudinger, \emph{Elliptic Partial Differential Equations Of Second
Order}, Grundlehren der Mathematischen Wissenschaften, vol. 224, Springer-Verlag, 1977.

\bibitem{Kri13} D. Kriventsov, \emph{$C^{1,\alpha}$ interior regularity for nonlinear nonlocal elliptic equations with rough kernels}, Comm. Partial Differential Equations 38 (2013), 2081-2016.

\bibitem{Mou19}  C. Mou, \emph{Existence of $C^\alpha$ solutions to integro-PDEs},  Calc. Var. Partial Differential Equations  (2019), 58:143.

\bibitem{RS16} X. Ros-Oton, J. Serra, \emph{Regularity theory for general stable operators}, J. Differential Equations 260 (2016), 8675-8715.

\bibitem{RTW23} X. Ros-Oton, D. Torres-Latorre, M. Weidner, \emph{Semiconvexity estimates for nonlinear integro-differential equations}, preprint arXiv (2023). 

\bibitem{Ser15} J. Serra, \emph{$C^{\sigma+\alpha}$ regularity for concave nonlocal fully nonlinear elliptic equations with rough kernels}, Calc. Var. Partial Differential Equations 54 (2015), 3571-3601.

\bibitem{Sil06} L. Silvestre, \emph{H\"older estimates for solutions of integro-differential equations like the
fractional Laplacian}, Indiana Univ. Math. J. 55 (2006), 1155-1174.

\bibitem{Yu172} H. Yu, \emph{A Dirichlet problem for nonlocal degenerate elliptic operators with internal nonlinearity}, J. Math. Anal. Appl. 448 (2017), 1895-1919.


\bibitem{Yu17} H. Yu, \emph{Smooth solutions to a class of nonlocal fully nonlinear elliptic equations}, Indiana Univ. Math. J. 66 (2017), 1895-1919.



\end{thebibliography}
\end{document}